\theoremstyle{plain}
\newtheorem{theorem}{Theorem}[section]
\newtheorem{lemma}[theorem]{Lemma}
\newtheorem{proposition}[theorem]{Proposition}
\newtheorem{corollary}[theorem]{Corollary}
\newtheorem{Counter-example}[theorem]{Counter-example}
\newtheorem{remark}[theorem]{Remark}
\theoremstyle{definition}
\theoremstyle{remark}
\long\def\symbolfootnote[#1]#2{\begingroup\def\thefootnote{\fnsymbol{footnote}}
\footnote[#1]{#2}\endgroup}
\begin{document}

\def\Q{\mathbb Q}
\def\R{\mathbb R}
\def\N{\mathbb N}
\def\Z{\mathbb Z}
\def\C{\mathbb C}
\def\S{\mathbb S}
\def\L{\mathbb L}
\def\H{\mathbb H}
\def\K{\mathbb K}
\def\X{\mathbb X}
\def\Y{\mathbb Y}
\def\Z{\mathbb Z}
\def\E{\mathbb E}
\def\J{\mathbb J}
\def\I{\mathbb I}
\def\T{\mathbb T}
\def\H{\mathbb H}

\title{On the first eigenvalue of the Laplace operator \\ for compact spacelike submanifolds in Lorentz-Minkowski spacetime $\L^m$}

\author{Francisco J. Palomo$^*$}
\address{
  Departamento de Matem\'{a}tica
Aplicada, 
Universidad de M\'{a}laga,  29071-M\'{a}laga (Spain)}
\email{fjpalomo@ctima.uma.es}

\author{Alfonso Romero}
\address{
  Departamento de Geometr\'{i}a y Toplog\'{i}a, Universidad de Granada, 18071-Granada (Spain)}
\email{aromero@ugr.es}

\keywords{Compact spacelike submanifolds, first eigenvalue of the Laplace operator, mean curvature vector field.}

\subjclass[2010]{Primary 53C40, 35P15, 53C50 Secondary 53C42, 58J50.}

\date{}

\symbolfootnote[ 0 ]{The first author is partially supported by Spanish MINECO and ERDF project MTM2016-78807-C2-2-P and the second one by Spanish MINECO and ERDF project MTM2016-78807-C2-1-P. \newline 
$^*$ Corresponding author.

}

\thispagestyle{empty}

\begin{abstract}
 For any compact spacelike submanifold $M$ of Lorentz-Minkowski spacetime $\L^m$,
 a family of upper bounds for the first eigenvalue of the Laplace operator is obtained. For each one of these inequalities, becoming an equality can be characterized in geometric terms. In particular, the eigenvalue achieves one of these upper bounds if and only if $M$ lies minimally in a hypersphere of a spacelike hyperplane. The inequalities are inspired by well-known work of Reilly \cite{Re}.
However, his technique cannot be applied to our case. Even more, the same Reilly upper bound does not work always for such a $M$, as shown by a family of counter-examples. So, a new technique, based on an integral formula on compact spacelike sections of the light cone in $\L^m$ is developed. The technique is genuine in our setting, that is, it cannot be extended to another semi-Euclidean spaces of higher index.

\end{abstract}

\maketitle

\markboth{}{}

\hyphenation{Lo-rent-zi-an}

\section{Introduction}
\noindent Let $M$ be $n$-dimensional compact submanifold in the $m$-dimensional Euclidean space $\E^m$. Inspired by \cite{BW},
Reilly found in \cite{Re} an optimal extrinsic inequality for the first nonzero eigenvalue $\lambda_1$ of the Laplacian of the induced metric on $M$ in terms of the square of the length of
the mean curvature $\|\mathbf{H}\|^2$ as follows,
\begin{equation}\label{reilly}
\lambda_{1}\leq n\frac{\displaystyle{\int}_{M}\|\mathbf{H}\|^2\,dV}{\mathrm{Vol}(M)}\,,
\end{equation}
where $\mathrm{Vol}(M)$ is the volume of $M$.
Moreover, the equality holds in (\ref{reilly}) if and only if $M$ lies minimally
in some hypersphere in $\E^m$.

Now consider an $n$-dimensional spacelike submanifold $M$ of the Lorentz-Minkowski spacetime $\L^m$.
That is, endowed with the induced metric $M$ is a Riemannian manifold. Assume $M$ is compact (then necessarily $m\geq n+2$).  Then the following question arises in a natural way.
\begin{quote}
{\it Does formula $(\ref{reilly})$ hold for any compact spacelike submanifold in $\L^m$?}
\end{quote}
The answer to this question is negative in general. In fact, we show a counter-example in Section 3.
Namely, given any $n\geq 1$, there exists an isometric immersion of the unit round $n$-dimensional sphere $\S^n$ in $\L^{n+2}$ with $\| \mathbf{H}\| ^2 \geq 0$ and such that
$$n\,\frac{\displaystyle{\int}_{\S^n}\|\mathbf{H}\|^2\,dV}{\mathrm{Vol}(\S^n)}< n=\lambda_{1}(\S^n).$$

For some special families of compact spacelike submanifolds $\psi: M^n \to \L^m$ formula (\ref{reilly}) holds whenever several extra hypotheses are assumed. First of all, inequality (\ref{reilly}) clearly holds if $\psi(M^n)$ lies in a spacelike affine hyperplane of $\L^m$. Now, consider such a $\psi$ satisfies $\psi(M^n)\subset \Pi$, where $\Pi$ is a lightlike affine hyperplane in $\L^m$. Without loss of generality, we may consider $\Pi$ defined by means of $x_{1}=x_{m}$. Then, we write down $\psi=(\psi_{1},\psi_{2},...,\psi_{m-1},\psi_{m})$ with $\psi_{1}=\psi_{m}$ and hence the mapping $\widetilde{\psi}:M^{n}\to \E^{m-2}$ defined by $\widetilde{\psi}=(\psi_{2},...,\psi_{m-1})$ is an immersion such that the metric induced from $\E^{m-2}$ via $\widetilde{\psi}$ agrees with the metric induced from $\L^m$ via $\psi$. Moreover, if $\mathbf{H}$ and $\widetilde{\mathbf{H}}$ are the mean curvature vector fields relative to $\psi$ and $\widetilde{\psi}$, respectively, then  $\|\mathbf{H}\|^2=\|\widetilde{\mathbf{H}}\|^2$ clearly. Thus, making use of (\ref{reilly}) for $\widetilde{\psi}$, we obtain (\ref{reilly}) for $\psi$. Note that in these two cases we have $\|\mathbf{H}\|^2\geq 0$. At this point, recall that no compact spacelike submanifold in $\L^m$ satisfies $\mathbf{H}=0$ (as in the Euclidean case). On the other hand, if we assume $\mathbf{H}_{p}\neq 0$ for all $p\in M$, then $\mathbf{H}$ cannot be causal everywhere \cite{AER}.

Another situation where inequality (\ref{reilly}) holds for certain compact spacelike submanifolds of $\L^m$ is the following. Let $\psi: M^2 \to \L^4$ be a spacelike surface such that $\psi(M^2)$ lies in a lightlike cone of $\L^4$. We show in \cite{PaRo} that the Gauss curvature of $M^2$ satisfies $K=\|\mathbf{H}\|^2$. When $M^2$ is compact then $M^2$ has the topology of the sphere $\S^2$, \cite{PaRo}. Therefore, the Gauss-Bonnet Theorem gives $\int_{M}\|\mathbf{H}\|^2 dA=4\pi$. Now, we have $
\lambda_{1}\leq 8\pi/\mathrm{Area}(M^2),
$ from the Hersch inequality, \cite{Hersch}. Therefore, we arrive to integral inequality (\ref{reilly}). Equality holds if and only if $M^2$ is totally umbilical in $\L^4$ \cite[Theorem 5.4]{PaRo}. We would like to point out that previous argument strongly depends on the dimension.

In view of the previous discussion, the following question emerges naturally.
\begin{quote}
{\it Is there an alternative to inequality $(\ref{reilly})$ for any compact spacelike submanifold in $\L^m$?
}
\end{quote}
Our main aim is then to look for {\it optimal} upper bounds for the first eigenvalue of the Laplace operator of a compact spacelike submanifold $M$ in $\L^m$. In the same philosophy of (\ref{reilly}), we search for upper bounds for $\lambda_1$ in terms of the mean curvature vector field and volume of $M$ and to characterize when the upper bound is attained.

It should be noticed that the technique in \cite{Re} does not work by serious reasons in our setting. In fact, a careful reading of \cite{Re} reveals three fundamental facts none one of them with a useful counterpart 
in our case. The first one is an averaging principle in \cite[Proposition 3]{Re} which gives an integral formula for the restriction of a quadratic form on the $(m-1)$-dimensional unit sphere $\S^{m-1}\subset \E^{m}$. 
The second fact is that the normal bundle of any submanifold in $\E^m$ is naturally endowed with positive definite metric. Finally, Cauchy-Schwarz inequality for vectors in $\E^{m}$ is used several times \cite[formula (7)]{Re}.
Now, neither De Sitter spacetime $\S^{m-1}_{1}$ nor unit hyperbolic space $\H^{m-1}$ (the two nondegenerate hypersurfaces in $\L^m$ consisting of the unit spacelike vectors and unit timelike vectors, respectively) is compact. Hence, it is imposible to state an averaging in any case. Moreover, the normal bundle of a spacelike submanifold of $\L^m$ with codimension at least $2$ has  Lorentzian signature. Finally, Cauchy-Schwarz inequality for vectors in $\L^m$ clearly does not hold. 
The first aim of this paper is to develop a new and suitable technique to avoid the mentioned difficulties. A key fact has been to replace the unit sphere $\S^{m-1}\subset \E^{m}$ by certain spherical sections in the lightlike cone of $\L^m$. On these spherical sections an averaging principle is given (Section 4).

The following results are typical examples of those we obtain in this paper.

\vspace{2mm}
\begin{quote}
{\it {\bf Proposition \ref{oldprincipal}}. For each unit timelike vector $a\in \L^{m}$, the first eigenvalue $\lambda_1$ of the Laplace operator for a compact $n$-dimensional spacelike submanifold $M$ in Lorentz-Minkowski spacetime $\L^m$ satisfies
$$
\hspace*{-3cm}\mathrm{(E)} \hspace*{3cm}\lambda_1 \leq n\,\frac{\displaystyle{\int}_{M}\Big(\|\mathbf{H}\|^2 +\langle \mathbf{H}, a \rangle^2 \Big)\,dV}{\mathrm{Vol}(M) +\frac{1}{n} \displaystyle{\int}_{M}\| a^{T}\|^2\, dV}\,,
$$
where $a^{T}\in \mathfrak{X}(M)$ is, at any point $p\in M$, the orthogonal projection of $a$ on $T_{p}M$. If $\psi:M\to \L^m$ has the center of gravity located at the origin, the equality in $\mathrm{(E)}$ holds if and only if there exists $\mu_{a}\in C^{\infty}(M)$ such that
$
\Delta \psi + \lambda_{1} \psi=\mu_{a} a.
$}
\end{quote}

\vspace{2mm}
\noindent Of course, the assumption on the spacelike immersion to discuss the equality in (E) is not a geometric restriction. In fact, it is easily achieved by means of a suitable traslation of the original immersion.
As a direct consequence of previous result we get the main Theorem of this paper (Theorem \ref{reprincipal}) with a clear geometric meaning.

\vspace{2mm}

\begin{quote}
{\it With the same notation as above, we have
$$
\hspace*{-5cm}\mathrm{(E^{*})} \hspace*{4cm}\lambda_1 \leq n\, \frac{\displaystyle{\int}_{M}\Big(\|\mathbf{H}\|^2 +\langle \mathbf{H}, a \rangle^2 \Big) \,dV}{\mathrm{Vol}(M)}.\hspace*{-1cm}
$$
The equality holds if and only if  $M$ factors through a spacelike affine hyperplane $\Pi$ orthogonal to $a\in \L^m$ and $M$ is minimal in some hypersphere in $\Pi$ with radius $\sqrt{n/\lambda_1}$.}
\end{quote}

\vspace{2mm}

\noindent As expected, the upper bound for $\lambda_1$ given in $\mathrm{(E^{*}})$ is bigger than the upper bound in (\ref{reilly}) for the case of compact submanifolds in an Euclidean space. In the very particular case that $\psi:M \to \L^{m}$ factors through a spacelike affine hyperplane  previous Theorem implies inequality (\ref{reilly}) as we known. Of course, formula (E*) cannot be deduced from  (\ref{reilly}) using the immersion $\pi_a \circ \psi : M \longrightarrow a^{\perp}$, where $\pi_a : \mathbb{L}^m \longrightarrow a^{\perp}$ is the orthogonal projection (see Remark \ref{end} for details). 

As a consequence of previous result we have
(Corollary \ref{noche}).
\vspace{2mm}
\begin{quote}
{\it If the first eigenvalue $\lambda_1$ of the Laplace operator of a compact spacelike $n$-dimensional spacelike submanifold  $M$ in $\L^{n+2}$ satisfies $$\lambda_{1}=n(\|\mathbf{H}\|^2+ \langle \mathbf{H}, a \rangle^2)$$ for some unit timelike vector $a\in \L^m$. Then $M$ is contained in a spacelike affine hyperplane orthogonal to $a\in \L^m$ as a round $n$-sphere of radius $\sqrt{n/\lambda_{1}}$.}
\end{quote}
\vspace{2mm}

Finally, we end this Section with several brief commentaries on the structure of this paper.  In Section 4 we introduce the spherical section $\S^{m-2}_a \subset\L^m$ relative to a unit timelike vector $a\in \L^m$. Each $\S^{m-2}_a $ is a compact spacelike submanifold of $\L^m$ isometric to the unit round sphere $\S^{m-2}$. In Lemma \ref{formula-integral}  we recall an averaging principle specific for $\S^{m-2}_a$ (see \cite[Lemma 3.4 (b)]{GPR}) . Section 5 is devoted to introduce and to discuss the notion of $\lambda_1$-test vector field on a compact spacelike submanifold $M$ in  $\L^m$. It also includes a general upper bound for $\lambda_1$ (Lemma \ref{main-lemma}).
Section 6 contains the previously quoted main results. These are achieved when we specialize Lemma \ref{main-lemma} for some suitable choices of $\lambda_{1}$-test vector fields.  As has been noticed, besides of the technique, there is a remarkable difference between our results and the extrinsic upper bound for $\lambda_1$ obtained by Reilly  \cite{Re}. Namely, every result in this paper shows a family of upper bounds for $\lambda_1$ parametrized on the unit timelike vectors in $\L^m$.

\section{Preliminaries}
\noindent Let $\L^{m}$ be the $m$-dimensional Lorentz-Minkowski spacetime, that is, $\L^{m}$ is $\R^{m}$ endowed
with the Lorentzian metric 
\begin{equation}\label{lorentzmetric}
\langle\; ,\; \rangle =
-dx_{1}^{2}+dx_{2}^{2}+\cdots+dx_{m}^{2},
\end{equation}
 where
$(x_{1},x_{1},\dots, x_{m})$ are the canonical coordinates of
$\R^{m}$. For every $v\in \L^m$, we write $\| v\|^{2}$ for $\langle v ,v \rangle$
although, of course, $\| v\|^{2}$ is not $\geq 0$, in general. Along this paper we assume $m\geq 3$. 

A smooth immersion $\psi:M^{n}\rightarrow \L^{m}$ of an
$n$-dimensional (connected) manifold $M^n$ is said to be
spacelike if the induced metric tensor via $\psi$ (denoted also by
$\langle\, ,\, \rangle$) is a Riemannian metric on $M^n$. In this
case, we call $M^n$ as a spacelike submanifold.

Let $\overline{\nabla}$ and $\nabla$ be the Levi-Civita
connections of  $\L^{m}$ and $M^n$, respectively. Let
$\nabla^{\perp}$ be the connection on the normal bundle. The Gauss
and Weingarten formulas are
$$\overline{\nabla}_X Y=\nabla_XY + \mathrm{II}(X,Y)
\, \quad \mathrm{and} \, \quad
\overline{\nabla}_X\xi=-A_{\xi}X+\nabla^{\perp}_X\,\xi,$$
for any $X,Y \in \mathfrak{X}(M^{n})$ and $\xi \in
\mathfrak{X}^{\perp}(M^{n})$, and where $\mathrm{II}$ denotes the
second fundamental form of $\psi$. As usual, we have agreed to ignore the differential of the map $\psi$.
The shape operator
corresponding to $\xi$, $A_{\xi}$, is related to $\mathrm{II}$ by
$$\langle A_{\xi}X, Y \rangle = \langle \mathrm{II}(X,Y), \xi
\rangle,$$
for all $X,Y \in \mathfrak{X}(M^{n})$.

The mean curvature vector field of $\psi$ is given by
$\mathbf{H}=\frac{1}{n}\,\mathrm{tr}_{_{\langle\; , \;
\rangle}}\mathrm{II},$ and it satisfies the Beltrami equation 
\begin{equation}\label{beltrami}
\Delta \psi=n \mathbf{H},
\end{equation} where the $i$-th component of $\Delta \psi$ is the Laplace operator $\Delta$ of $M$ applied to the $i$-th component of $\psi$, i.e., $\Delta \psi =(\Delta \psi_{1} ,\cdots , \Delta \psi_{m} )$. Moreover, $\Delta\| \psi\|^{2}= 2n+ 2n\langle \psi, \mathbf{H}\rangle$ and therefore, when $M$ is compact, we have the well-known Minkowski formula
\begin{equation}\label{1040}
\int_{M}(1+\langle \psi, \mathbf{H}\rangle)dV=0.
\end{equation}

\section{Counter-example}\label{example}
\noindent Let $\E^{m}$ be the $m$-dimensional Euclidean space that is, $\E^{m}$ is $\R^{m}$ endowed
with its usual Riemannian metric. We denote a point $(t,y)\in \R^m$ with $t\in \R$ and $y\in \R^{m-1}$. It is a direct computation that
$$
\Psi\colon\E^{m}\to \L^{m+1},\quad (t,y)\mapsto (\cosh (t), \sinh(t), y)
$$
is an isometric embedding.

Let us consider the $(n\geq 1)$-dimensional unit round sphere $\S^n \subset \E^{n+1}$ endowed with the usual induced metric. Thus, 
$\psi=\Psi\mid_{\S^{n}}$ is an isometric embedding of $\S^n$ into $\L^{n+2}$.
Now, the normal bundle of $\psi$ is spanned at every point $(t,y)\in \S^n$ by the following normal vector fields
$$\mathbf{N_{1}}(t,y)=(\cosh (t), \sinh(t), 0),\,\,\, \mathbf{N_{2}}(t,y)=(t\sinh (t), t\cosh(t), y),$$
which satisfy $\langle \mathbf{N_{1}}, \mathbf{N_{1}} \rangle=-1=-\langle \mathbf{N_{2}}, \mathbf{N_{2}} \rangle$ and $\langle \mathbf{N_{1}}, \mathbf{N_{2}} \rangle =0$.

The mean curvature vector field of $\psi$ is given by
\begin{equation}\label{contraejeH}
\mathbf{H}(t,y)=\frac{1-t^2}{n}\mathbf{N_{1}}(t,y)- \mathbf{N_{2}}(t,y)
\end{equation}
and so 
\begin{equation}\label{gr1}
\|\mathbf{H}\|^2 (t,y) =1-\frac{(1-t^2)^2}{n^2}. 
\end{equation}
Expression (\ref{contraejeH}) may be obtained in an alternatively way from the Beltrami equation (\ref{beltrami}). In fact, as previously denote by $(t,y)$ the restrictions to $\mathbb{S}^n$ of the
usual coordinates in $\mathbb{R}^{n+1}=\mathbb{R} \times \mathbb{R}^n$. We have $\Delta y = -n y$, and moreover it is not difficult to show that
$$
\Delta \cosh (t)=-n t\sinh(t)+(1-t^2)\cosh(t),
$$
$$\Delta \sinh (t)=-n t\cosh(t)+(1-t^2)\sinh(t).
$$
Collecting previous formulas we arrive again to the formula (\ref{contraejeH}).

From (\ref{gr1}), $\|\mathbf{H}\|^2< 1$ holds in $\S^{n}$ minus two antipodal points. Therefore, inequality (\ref{reilly}) does not hold for $\psi$. In fact,
we have that the quotient 
$$
n\frac{\displaystyle{\int}_{\S^n}\|\mathbf{H}\|^2\,dV}{\mathrm{Vol}(\S^n)}< n=\lambda_{1}(\S^n),
$$
where $\lambda_{1}(\S^n)$ denotes the first non vanishing eigenvalue of the Laplace operator of $\S^n$ (for a proof $n=\lambda_{1}(\S^n)$ see for instance \cite[Chapter II]{ChavelE}).

Even more, left hand side of previous inequality can be more precisely estimated by using the following result.
\begin{lemma}\label{formula-integral0} {\rm \cite[Lemma VII.3.1]{ChavelR}}
For any symmetric bilinear form $Q$ on $\R^{m}$ we have,
$$
\int_{v\in \S^{m-1}} Q(v,v)\, dV=\frac{\mathrm{Vol}(\S^{m-1})}{m} \,\mathrm{trace}(A_{Q}),
$$
where $A_{Q}$ is the operator of $\R^{m}$ defined by $\langle A_{Q}(u), v\rangle=Q(u,v)$ for all $u,v\in \R^{m}$ $($here $\langle\,\,,\,\, \rangle$ denotes the usual Riemannian metric of $\R^m$ $)$.
\end{lemma}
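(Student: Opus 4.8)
The plan is to use that both sides of the asserted identity are linear in $Q$ and that the round sphere is highly symmetric, so that the whole computation collapses to determining a single universal constant.

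First I would fix the canonical orthonormal basis $e_1,\dots,e_m$ of $\R^m$ and write $Q(v,v)=\sum_{i,j=1}^m q_{ij}\,v_iv_j$, where $q_{ij}=q_{ji}$ are the entries of the matrix of $A_Q$ in this basis, so that $\mathrm{trace}(A_Q)=\sum_{i=1}^m q_{ii}$ and $\int_{\S^{m-1}}Q(v,v)\,dV=\sum_{i,j}q_{ij}\int_{\S^{m-1}}v_iv_j\,dV$. For $i\neq j$ the coordinate reflection $v\mapsto(v_1,\dots,-v_j,\dots,v_m)$ is an isometry of $\S^{m-1}$ that preserves $dV$ and changes the sign of $v_iv_j$, hence $\int_{\S^{m-1}}v_iv_j\,dV=0$; and any permutation of coordinates is likewise a volume-preserving isometry, so the quantity $c_m:=\int_{\S^{m-1}}v_i^2\,dV$ does not depend on $i$. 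Testing against $Q(v,v)=\|v\|^2$, i.e. summing over $i$, gives $m\,c_m=\int_{\S^{m-1}}\sum_{i}v_i^2\,dV=\int_{\S^{m-1}}\|v\|^2\,dV=\mathrm{Vol}(\S^{m-1})$, whence $c_m=\mathrm{Vol}(\S^{m-1})/m$. Substituting back yields $\int_{\S^{m-1}}Q(v,v)\,dV=c_m\sum_i q_{ii}=\frac{\mathrm{Vol}(\S^{m-1})}{m}\,\mathrm{trace}(A_Q)$, which is the claim.

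Alternatively, and more invariantly, one may diagonalize: since $A_Q$ is self-adjoint there is an orthonormal basis $e_1,\dots,e_m$ with $A_Q e_i=\mu_i e_i$, so $Q(v,v)=\sum_i\mu_i\langle v,e_i\rangle^2$; by $O(m)$-invariance of $dV$ the integral $\int_{\S^{m-1}}\langle v,e_i\rangle^2\,dV$ is the same for every $i$, and since $\sum_i\langle v,e_i\rangle^2=\|v\|^2=1$ on $\S^{m-1}$ these $m$ equal integrals add up to $\mathrm{Vol}(\S^{m-1})$; hence each equals $\mathrm{Vol}(\S^{m-1})/m$ and $\int_{\S^{m-1}}Q(v,v)\,dV=\frac{\mathrm{Vol}(\S^{m-1})}{m}\sum_i\mu_i=\frac{\mathrm{Vol}(\S^{m-1})}{m}\,\mathrm{trace}(A_Q)$.

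There is no genuine obstacle in this proof; it is entirely elementary. The only points needing a word of justification are that the off-diagonal terms integrate to zero and that the diagonal ones contribute equally, and both are immediate from the invariance of the spherical measure under the orthogonal group (in fact already under coordinate reflections and permutations). The normalizing constant $\mathrm{Vol}(\S^{m-1})/m$ is then not an independent computation but is forced by taking the trace, i.e. by specializing to $Q(v,v)=\|v\|^2$.
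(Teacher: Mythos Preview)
Your argument is correct; the paper does not actually supply a proof of this lemma but simply quotes it from Chavel \cite[Lemma VII.3.1]{ChavelR}, and what you wrote is precisely the standard $O(m)$-invariance proof one finds there. There is nothing to add: the vanishing of the off-diagonal integrals by a coordinate reflection, the equality of the diagonal ones by coordinate permutation (or, equivalently, by diagonalizing $A_Q$), and the determination of the constant via $\sum_i v_i^2=1$ together give the result.
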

\noindent In fact, choose $Q(v,v)=t^{2}$ with $v=(t,y)\in \R^{n+1}$. Then previous Lemma gives
$$
\int_{\S^{n}} (1-t^2)\, dV=\frac{n}{n+1}\mathrm{Vol}(\S^{n}).
$$
Now the Cauchy-Schwarz inequality for integrals is called to get
$$
\int_{\S^{n}} (1-t^2)^2\, dV\geq \frac{n^2}{(n+1)^2}\mathrm{Vol}(\S^{n}).
$$
Therefore, 
$$
n\frac{\displaystyle{\int}_{\S^{n}}\|\mathbf{H}\|^2\,dV}{\mathrm{Vol}(\S^n)}\leq  n- \frac{n}{(n+1)^2}<n.
$$

\bigskip

\begin{remark}
{\rm
The isometric embedding $\psi:\S^n \to \L^{n+2}$
satisfies $\psi(\S^n)\subset \Sigma_{f}$ where $$\Sigma_f=\{(f(y),y): y\in \R^{n} \}$$
is the entire spacelike graph in $\L^{n+2}$ corresponding with $f(y)=\sqrt{1+y_{1}^2}$, and whose mean curvature satisfies $H^2=1/n^2$, (see for instance \cite{Trei}). Moreover, let us note that $\Sigma_f$ is isometric to the Euclidean space $\E^{n+1}$ via $\Psi$. In particular, $\psi(\S^n)$ is a geodesic sphere in $\Sigma_{f}$. This fact gives us that $\psi(\S^n)$ is unknotted in the sense that it is the boundary of an open $(n+1)$-ball in $\Sigma_f\subset \L^{n+2}$ \cite{Ko}. Therefore, there is no relationship between this topological notion and the fact that (\ref{reilly}) holds for any spacelike embedding of $\S^n$ in $\L^{n+2}$.
}
\end{remark}

\bigskip

We end this section pointing out that the construction of this counter-example can be generalized as follows. Let us take any unit spacelike curve
$
\alpha\colon I\subset \E^1 \to \L^{2},
$ 
such that $I$ is an open interval with $[-1, 1]\subset I$ (i.e., $\alpha$ is an isometric immersion).
From $\alpha$ we can define the isometric immersion
$$
\Psi_{\alpha}\colon I\times \E^{n}\to \L^{n+2}=\L^{2}\times \E^{n-1},\quad (t,y)\mapsto (\alpha(t), y).
$$
The map $\Psi_{\alpha}$ is a cylinder over the curve $\alpha$. In a similar way to the case below, we consider $\psi_{\alpha}:=\Psi_{\alpha}\mid \S^{n}$ and then compute that at every point $(t,y)\in \S^n$ the mean curvature vector field of $\psi_{\alpha}$ satisfies
\begin{equation}\label{gr2}
\|\mathbf{H}_{\psi_{\alpha}}\|^2 (t,y) =1+\frac{(1-t^2)^2}{n^2}\| \alpha''(t)\|^2,
\end{equation}
note that $\| \alpha'(t)\|^2=1$ implies $\| \alpha''(t)\|^2\leq 0$. Therfore, for every non-geodesic unit spacelike curve $
\alpha\colon I\to \L^{2}
$ we have
$$
n\frac{\displaystyle{\int}_{\S^n}\|\mathbf{H}_{\psi_{\alpha}}\|^2\,dV}{\mathrm{Vol}(\S^n)}< n=\lambda_{1}(\S^n).
$$

\section{Set up}
\noindent For each unit timelike vector $a\in \mathbb{L}^m$ (i.e. with $\langle a ,a \rangle=-1$), we define the spherical section in $\L^{m}$ relative to $a$ as
$$
\S_{a}^{m-2}=\{v\in \L^{m}: \langle v ,v \rangle=0,\,\,\, \langle a ,v \rangle=-1\},
$$
that is, $\S_{a}^{m-2}$ is the intersection of the light cone of $\L^{m}$ with the spacelike hyperplane given by $\langle a ,x \rangle=-1$.

It is not difficult to see that $\S_{a}^{m-2}$ is an $(m-2)-$dimensional compact spacelike submanifold isometric to the unit round sphere $\S^{m-2}$.
The spherical section $\S_{a}^{m-2}$ may be seen as the fiber of the trivial subbundle $C_{a}(\L^{m})=\L^{m}\times \S_{a}^{m-2}$ of the tangent bundle $T\L^{m}=\mathbb{L}^m \times \mathbb{R}^m$. In fact, $C_{a}(\L^{m})$ turns into a very particular case of the null congruence of a spacetime with respect to any of its timelike vector fields \cite{GPR}, \cite{Har}. Moreover, the  Sasaki metric on $T\L^{m}$, constructed from the Lorentzian metric of $\L^{m}$, induces on each slice $\{x\}\times \S_{a}^{m-2}$ the Riemannian metric of $\S_{a}^{m-2}\subset \L^m$ \cite[Proposition 2.3]{GPR}.

The key tool we will use here is the following integral formula (compare with \cite[Lemma VII.3.1]{ChavelR}).
\begin{lemma}\label{formula-integral} {\rm \cite[Lemma 3.4 (b)]{GPR}}
For any symmetric bilinear form $Q$ on $\L^{m}$ and any unit timelike vector $a\in \L^m$ we have,
$$
\int_{v\in \S_{a}^{m-2}} Q(v,v)\, dV_{a}=\frac{\mathrm{Vol}(\S^{m-2})}{m-1}\Big[ mQ(a,a)+ \mathrm{trace}(A_{Q})\Big],
$$
where $A_{Q}$ is the operator of $\L^{m}$ defined by $\langle A_{Q}(u), v\rangle=Q(u,v)$ for all $u,v\in \L^{m}$.
\end{lemma}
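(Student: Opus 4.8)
The plan is to reduce the identity to the classical Euclidean averaging formula of Lemma~\ref{formula-integral0} by exploiting the explicit description of $\S_a^{m-2}$. Since $a$ is timelike, the hyperplane $a^{\perp}$ is positive definite, so first I would complete $a$ to a Lorentzian orthonormal basis $(e_1=a,e_2,\dots,e_m)$ of $\L^m$, with $\langle e_1,e_1\rangle=-1$ and $\langle e_j,e_j\rangle=1$ for $2\le j\le m$; all quantities in the statement are basis-independent, so this costs nothing. In such a basis the defining equations $\langle v,v\rangle=0$ and $\langle a,v\rangle=-1$ of $\S_a^{m-2}$ force the $e_1$-coordinate of $v$ to be $1$, so every $v\in\S_a^{m-2}$ is uniquely $v=a+w$ with $w\in a^{\perp}$, $\langle w,w\rangle=1$. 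As already recorded in the text, $v\mapsto w$ is an isometry from $\S_a^{m-2}$ onto the unit round sphere of the Euclidean space $a^{\perp}\cong\E^{m-1}$.

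Next I would expand $Q(v,v)=Q(a,a)+2\,Q(a,w)+Q(w,w)$ by bilinearity and symmetry and integrate the three terms separately over $\S_a^{m-2}$. The constant term contributes $\mathrm{Vol}(\S^{m-2})\,Q(a,a)$. The cross term integrates to $0$, since $w\mapsto-w$ is a measure-preserving involution of the sphere of $a^{\perp}$ under which $w\mapsto Q(a,w)$ is odd. For the last term I would apply Lemma~\ref{formula-integral0} to the positive definite space $a^{\perp}$ and the quadratic form $Q|_{a^{\perp}}$, obtaining
\[
\int_{\S_a^{m-2}} Q(w,w)\,dV_a=\frac{\mathrm{Vol}(\S^{m-2})}{m-1}\,\mathrm{trace}\bigl(A_{Q|_{a^{\perp}}}\bigr),
\]
where $A_{Q|_{a^{\perp}}}$ is the self-adjoint operator of $a^{\perp}$ determined by $Q|_{a^{\perp}}$.

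The step I expect to need the most care — although it is only a short sign computation — is relating the Euclidean trace $\mathrm{trace}(A_{Q|_{a^{\perp}}})$ to the Lorentzian trace $\mathrm{trace}(A_Q)$. Evaluating both in the orthonormal basis above gives $\mathrm{trace}(A_Q)=\sum_{i=1}^m\langle e_i,e_i\rangle\,Q(e_i,e_i)=-Q(a,a)+\sum_{j=2}^m Q(e_j,e_j)$ and $\mathrm{trace}(A_{Q|_{a^{\perp}}})=\sum_{j=2}^m Q(e_j,e_j)$, hence $\mathrm{trace}(A_{Q|_{a^{\perp}}})=\mathrm{trace}(A_Q)+Q(a,a)$; invariantly, with respect to the splitting $\L^m=\R a\oplus a^{\perp}$ the $a$-entry of the matrix of $A_Q$ equals $-Q(a,a)$ because $\langle a,a\rangle=-1$, while the $a^{\perp}$-block is $A_{Q|_{a^{\perp}}}$. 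Adding the three contributions then gives
\[
\int_{\S_a^{m-2}} Q(v,v)\,dV_a=\mathrm{Vol}(\S^{m-2})\,Q(a,a)+\frac{\mathrm{Vol}(\S^{m-2})}{m-1}\bigl(\mathrm{trace}(A_Q)+Q(a,a)\bigr),
\]
and collecting the $Q(a,a)$ terms produces exactly $\dfrac{\mathrm{Vol}(\S^{m-2})}{m-1}\bigl[m\,Q(a,a)+\mathrm{trace}(A_Q)\bigr]$, which is the assertion.
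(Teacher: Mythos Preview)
Your proof is correct. The paper does not actually supply a proof of this lemma; it merely quotes it from \cite[Lemma 3.4 (b)]{GPR}, so there is nothing in the present paper to compare against beyond noting that your argument is a clean, self-contained derivation reducing the Lorentzian averaging to the Euclidean one of Lemma~\ref{formula-integral0} via the isometric parametrization $\S_a^{m-2}\ni v\leftrightarrow w=v-a\in\S^{m-2}\subset a^{\perp}$, and the sign bookkeeping $\mathrm{trace}(A_{Q|_{a^{\perp}}})=\mathrm{trace}(A_Q)+Q(a,a)$ is handled correctly.
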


Let us recall the well-known Minimum Principle for the smallest positive eigenvalue $\lambda_{1}$ of the Laplace operator $\Delta$ of a compact Riemannian manifold $(M,g)$ \cite[p. 186]{BGM}. For every non-zero $C^1$ function $f:M\to \R$ with $\int_{M}fdV=0$, we have that
\begin{equation}\label{minimum}
\int_{M}\| \nabla f \|^2 dV \geq \lambda_{1}\int_{M}f^2 dV\,,
\end{equation}
where $\nabla$ denotes the gradient operator on $M$. The equality holds if and only if $f$ is an eigenfunction of $\Delta$ corresponding to $\lambda_1$, that is, $\Delta f +\lambda_{1}f=0.$

\section{$\lambda_1$-test vector fields}
\noindent Let $\psi : M^n \rightarrow \mathbb{L}^m$ be a compact spacelike submanifold
(hence $m\geq n+2$) and $W\in \mathfrak{X}_{\psi}(M)$ a fixed vector field along the immersion $\psi$. For every $v\in \L^m$, let us consider $F_{v}\in C^{\infty}(M)$ given by
\begin{equation}\label{func}
F_{v}: M \to \R, \quad F_{v}(x)=\langle v, W(x)\rangle.
\end{equation} 
The vector field $W\in \mathfrak{X}_{\psi}(M)$ is said to be a $\lambda_{1}$-{\it test vector field} when for every $v\in \L^m$  we have
\begin{equation}\label{functest}
\int_{x\in M} F_{v}(x)   dV=0.
\end{equation}

From the Beltrami equation (\ref{beltrami}), the mean curvature vector field $\mathbf{H}$ is always a $\lambda_{1}$-test vector field. On the other hand, from any $W\in \mathfrak{X}_{\psi}(M)$, $W=(W_{1},..., W_{m})$, one can arrive to the $\lambda_{1}$-test vector field 
\begin{equation}\label{hat}
\widehat{W}=W-\frac{1}{\mathrm{Vol}(M)}c
\end{equation}
where $c=(c_{1},..., c_{m})$ and $c_{j}=\int_{M} W_{j} \, dV$, $1\leq j\leq m$.
Note that if $W$ is the restriction to $M^n$ of a fixed vector of $\L^m$ then $\widehat{W}=0$.

Let us fix a $\lambda_{1}$-test vector field $W$, using (\ref{functest}), the Minimum Principle (\ref{minimum}) gives 
\begin{equation}\label{f10}
\int_{x\in M}\|\nabla F_{v}\|^2(x)\, dV\geq \lambda_{1} \int_{x\in M} F_{v}^2(x)    dV,
\end{equation}
for all $v\in \L^m$. Now, let us fix a unit timelike vector $a\in \L^{m}$ and integrating both sides of (\ref{f10}) on $\S^{m-2}_a$, we get,
$$
\int_{v\in \S_{a}^{m-2}}\Big[\int_{x\in M}\|\nabla F_{v}\|^2 (x)   dV\Big]dV_{a}\geq \lambda_{1}\int_{v\in \S_{a}^{m-2}}\Big[\int_{x\in M} F_{v}^2(x)    dV\Big]dV_{a}.
$$
Then, we can make use of Fubini's Theorem to obtain
\begin{equation}\label{f20}
\int_{x\in M}\Big[\int_{v\in \S_{a}^{m-2}}\|\nabla F_{v}\|^2(x)    dV_{a}\Big]dV \geq \lambda_{1}\int_{x\in M}\Big[\int_{v\in \S_{a}^{m-2}} F_{v}^2(x)    dV_{a}\Big]dV.
\end{equation}
Next, the integral formula in Lemma \ref{formula-integral} is applied to the symmetric bilinear form $Q_{1}^{x}(u,v)=\langle \nabla F_{u}, \nabla F_{v}\rangle (x)$, $x\in M$ fixed, to
obtain
\begin{equation}\label{f30}
\int_{v\in \S_{a}^{m-2}}\|\nabla F_{v}\|^2 (x)  \, dV_{a}=
\frac{\mathrm{Vol}(\S^{m-2})}{m-1}\Big[ m\| \nabla F_{a} \|^2 (x)+ \mathrm{trace}(A_{Q_{1}^{x}}) \Big]
\end{equation}
and also to the symmetric bilinear form $Q_{2}^{x}(u,v)= F_{u}(x) F_{v} (x)$, $x\in M$,  to
obtain
\begin{equation}\label{f40}
\int_{v\in \S_{a}^{m-2}} F_{v}^2(x) \,  dV_{a}=\frac{\mathrm{Vol}(\S^{m-2})}{m-1}\Big[ m F_{a}^2 (x)+ \mathrm{trace}(A_{Q_{2}^{x}}) \Big].
\end{equation}
We easily see that
$\mathrm{trace}(A_{Q_{2}^{x}})=\| W\|^2(x)$ and clearly, $\Delta F_{v}+ \lambda_{1}F_{v}=0$ holds for all $v\in \S^{m-2}_a$ if and only if $\Delta W + \lambda_{1}W=0$.

Thus, we substitute (\ref{f30}) and (\ref{f40}) into inequality (\ref{f20}) to get the main technical result.

\begin{lemma}\label{main-lemma}
Let $\psi: M^{n}\to \L^{m}$ be a compact spacelike submanifold and $W\in \mathfrak{X}_{\psi}(M)$ a $\lambda_{1}$-test vector field. Then, for every unit timelike vector $a\in \L^m$ we have
\begin{equation}\label{tecnico}
\int_{ M}\Big[m\| \nabla \langle a, W\rangle \|^2 + \mathrm{trace}(A_{Q_{1}}) \Big]dV \geq \lambda_{1}\int_{M}\Big[ m \langle a, W\rangle ^2 + \| W\|^2 \Big]dV,
\end{equation}
where $\langle A_{Q_{1}}(u), v\rangle=\langle \nabla \langle u, W \rangle,\nabla \langle v, W \rangle \rangle$ for all $u,v\in \L^{m}$.
The equality holds if and only if we have $\Delta W + \lambda_{1}W =0.$
Thus, if equality holds for some $a$ then it holds for any unit timelike vector in $\L^m$.
\end{lemma}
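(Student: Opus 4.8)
The plan is to assemble inequality (\ref{tecnico}) directly from the chain of estimates already displayed, and then to isolate the equality discussion. First I would fix a $\lambda_1$-test vector field $W$ and, for each $v\in\L^m$, apply the Minimum Principle (\ref{minimum}) to the zero-mean function $F_v=\langle v,W\rangle$ (zero mean precisely because $W$ is a $\lambda_1$-test vector field); this is exactly (\ref{f10}), valid for every $v$. Next I would integrate (\ref{f10}) over $v\in\S^{m-2}_a$ against $dV_a$ and invoke Fubini's Theorem — both integrands are continuous on the compact product $M\times\S^{m-2}_a$ — to arrive at (\ref{f20}).

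The second step is the algebraic reduction. For each fixed $x\in M$ the assignments $(u,v)\mapsto\langle\nabla F_u,\nabla F_v\rangle(x)$ and $(u,v)\mapsto F_u(x)F_v(x)$ define symmetric bilinear forms $Q_1^x,Q_2^x$ on $\L^m$, and Lemma \ref{formula-integral} converts the inner integrals over $\S^{m-2}_a$ into (\ref{f30}) and (\ref{f40}). Here I would record the elementary identities $\langle\nabla F_a,\nabla F_a\rangle=\|\nabla\langle a,W\rangle\|^2$, $F_a^2=\langle a,W\rangle^2$, and $\mathrm{trace}(A_{Q_2^x})=\|W\|^2(x)$; the last holds because $A_{Q_2^x}(u)=\langle u,W(x)\rangle\,W(x)$, whose trace equals $\langle W(x),W(x)\rangle$ irrespective of the Lorentzian signature. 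Substituting (\ref{f30})--(\ref{f40}) into (\ref{f20}) and cancelling the positive constant $\mathrm{Vol}(\S^{m-2})/(m-1)$ yields (\ref{tecnico}).

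It remains to settle the equality case, which is the only point requiring a little care. Set $\Phi(v)=\int_M\big(\|\nabla F_v\|^2-\lambda_1 F_v^2\big)\,dV$, a continuous function on $\S^{m-2}_a$ that is $\geq 0$ everywhere by (\ref{f10}). Tracing the derivation back, equality in (\ref{tecnico}) is equivalent to $\int_{\S^{m-2}_a}\Phi(v)\,dV_a=0$, hence — by continuity together with nonnegativity — to $\Phi\equiv 0$ on $\S^{m-2}_a$. For each such $v$ the equality clause of the Minimum Principle then forces $\Delta F_v+\lambda_1 F_v=0$ (the case $F_v\equiv 0$ being trivially of this form). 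Finally I would observe that $\S^{m-2}_a$ spans $\L^m$ linearly: choosing coordinates with $a=(1,0,\dots,0)$ it equals $\{(1,u):u\in\S^{m-2}\}$, which contains a basis of $\R^m$. Hence ``$\Delta F_v+\lambda_1 F_v=0$ for all $v\in\S^{m-2}_a$'' is equivalent to the same identity for every $v\in\L^m$, and letting $v$ run through the coordinate vectors this is precisely $\Delta W+\lambda_1 W=0$; since that condition does not involve $a$, the last assertion of the lemma follows immediately. The main obstacle is essentially bookkeeping — ensuring that the passage from a single integrated inequality to pointwise-in-$v$ equality is legitimate, which is exactly the nonnegativity-plus-continuity argument above — rather than any substantive new difficulty.
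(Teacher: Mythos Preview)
Your proposal is correct and follows essentially the same route as the paper: the derivation of (\ref{tecnico}) is exactly the chain (\ref{f10})--(\ref{f40}) preceding the lemma, and your treatment of the equality case merely spells out what the paper asserts in one line (that $\Delta F_v+\lambda_1F_v=0$ for all $v\in\S^{m-2}_a$ is equivalent to $\Delta W+\lambda_1W=0$), by making explicit the nonnegativity-plus-continuity argument for $\Phi$ and the fact that $\S^{m-2}_a$ spans $\L^m$.
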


In order to obtain a formula for $\mathrm{trace}(A_{Q_{1}^{x}})$, we summarize here several definitions. For $Z\in \mathfrak{X}(M)$, let us recall that $\| \nabla Z \|^2(x)=\sum_{i=1}^{n}\| \nabla_{e_{i}}Z\|^2 $ where $\{e_{1},...,e_{n}\}$ is an orthonormal basis of $T_{x}M$ and $\| AZ\|^2(x)= \sum_{j=n+1}^{m}\varepsilon_{j}\| A_{e_{j}}Z\|^2$ where $\{e_{n+1},...,e_{m}\}$ is an orthonormal basis of $T_{x}^{\perp}M$ with $\varepsilon_{j}=\langle e_{j}, e_{j}\rangle$. In a similar way for $\xi \in \mathfrak{X}^{\perp}(M)$, we define $\| \nabla^{\perp} \xi \|^2(x)=\sum_{i=1}^{n}\| \nabla^{\perp}_{e_{i}}\xi\|^2 $. 
The decomposition $W=W^{T}+ W^{\perp}$ where $W^{T}\in \mathfrak{X}(M)$ and $W^{\perp}\in \mathfrak{X}^{\perp}(M)$ for $W\in \mathfrak{X}_{\psi}(M)$ will be extensively used. Let us recall that for the particular case $v\in \L^{m}$, we have $v^{T}=\nabla \langle v, \psi\rangle$.

Assume that $\{e_{1},...,e_{n}\}$ are eigenvectors for $A_{W^{\perp}}$.
Now, we compute
$$
\mathrm{trace}(A_{Q_{1}^{x}})=\sum_{i=1}^{n}\| \nabla F_{e_{i}}\|^{2}(x)+\sum_{j=n+1}^{m}\varepsilon_{j}\| \nabla F_{e_{j}}\|^{2}(x).
$$
On the one hand, we have
\[
\sum_{i=1}^{n}\| \nabla F_{e_{i}}\|^{2}=\sum_{i,k=1}^{n}\langle \nabla F_{e_{i}}, e_{k}\rangle^2=\sum_{i,k=1}^{n}[e_{k}\langle e_{i}, W\rangle]^{2}
\]
\[
\qquad \qquad \qquad =\sum_{i,k=1}^{n}\Big[\langle e_{i}, \nabla_{e_{k}}W^{T}\rangle  -\langle e_{i}, A_{W^{\perp}}(e_{k}) \rangle \Big] ^{2}
\]
\[
\qquad \qquad \quad \qquad \qquad \quad \qquad \quad \quad=\| \nabla W^{T}\| ^{2}(x)+\mathrm{trace}(A^2_{W^{\perp}})(x)-2\mathrm{trace}(A_{W^{\perp}}\nabla W^{T})(x),
\]
where $A_{W^{\perp}}\nabla W^{T}$ is the endomorphism field on $M$ given by $A_{W^{\perp}}\nabla W^{T} (v)=A_{W^{\perp}}\nabla_{v} W^{T}$ for $v\in T_{x}M$.
On the other hand, in a similar way, we have
$$
\sum_{j=n+1}^{m}\varepsilon_{j}\| \nabla F_{e_{j}}\|^{2}=\| \nabla^{\perp}W^{\perp}\| ^{2}(x)+\| AW^{T}\| ^2 (x)+2\mathrm{trace}(A_{\nabla^{\perp}W^{\perp}}W^{T})(x),
$$
where $A_{\nabla^{\perp}W^{\perp}}W^{T}(v)=A_{\nabla_{v}^{\perp}W^{\perp}}W^{T}$.
Therefore we deduce tha following general formula
$$
\mathrm{trace}(A_{Q_{1}^x})
=\| \nabla W^{T}\| ^{2}(x)+ \| AW^{T}\| ^2 (x)+\mathrm{trace}(A^2_{W^{\perp}})(x)+\| \nabla^{\perp}W^{\perp}\| ^{2}(x)
$$
\begin{equation}\label{f45}
\qquad + 2\mathrm{trace}(A_{\nabla^{\perp}W^{\perp}}W^{T})(x)-2\mathrm{trace}(A_{W^{\perp}}\nabla W^{T})(x).
\end{equation}
In the particular case $\xi\in \mathfrak{X}^{\perp}(M)$, formula (\ref{f45}) reduces to
\begin{equation}\label{f50}
\mathrm{trace}(A_{Q_{1}^{x}})=\mathrm{trace}(A^2_{\xi})(x)+ \| \nabla^{\perp}\xi\| ^{2}(x)
\end{equation}
and for $Z\in \mathfrak{X}(M)$ we have
$
\mathrm{trace}(A_{Q_{1}^{x}})=\| \nabla Z\| ^{2}(x)+ \| AZ\| ^2 (x).
$

\bigskip

\begin{remark}\label{Wnon-zero}
{\rm The right hand side of inequality (\ref{tecnico}) never vanishes except for $W=0$ at every point of $M$. In fact, the unit timelike vector $a\in \L^m$ gives the orthogonal decomposition $\L^{m}=\mathrm{Span}(a)\oplus a^{\perp}$ and therefore $W$ has a unique expression $W=W_{a}-\langle W, a\rangle a$ attending to this decomposition. Thus, for the right hand side of (\ref{tecnico}) we have
$$
\int_{M}\Big[ m \langle a, W\rangle ^2 + \| W\|^2 \Big]dV=\int_{M}\Big[( m-1) \langle a, W\rangle ^2 + \| W_{a}\|^2 \Big]dV\geq 0,
$$
and the equality only holds for $W=0$ identically.
}
\end{remark}
\begin{remark}
{\rm 
Asume that  $\psi: M^{n}\to \L^{m}$ is a compact spacelike submanifold. For every non-zero smooth function $f:M\to \R$ with $\int_{M}fdV=0$, the vector field $\xi=f\cdot a \in \mathfrak{X}_{\psi}(M)$ is a $\lambda_1$-test vector field. A direct computation gives $\nabla F_{v} =\langle v, a\rangle \nabla f$ for $v\in \L^m$, and therefore $\mathrm{trace}(A_{Q_{1}})=-\| \nabla f \|^2$. Thus, in this case, Lemma \ref{main-lemma} reduces to the Minimum Principle (\ref{minimum}).}
\end{remark}

\section{Main results}
\noindent In this Section, we specialize previous Lemma \ref{main-lemma} for several choices of the $\lambda_{1}$-test vector field $W\in \mathfrak{X}_{\psi}(M)$. First, let us take $W=\mathbf{H}$.
\begin{proposition}\label{H1}
For every unit timelike vector $a\in \L^m$, the first eigenvalue $\lambda_1$ of the Laplace operator for a compact $n$-dimensional spacelike submanifold $M$ in Lorentz-Minkowski spacetime $\L^m$ satisfies
\begin{equation}\label{f60}
\lambda_{1}\leq \frac{
\displaystyle{\int}_{ M} \Big[ m\| \nabla \langle a, \mathbf{H}\rangle \|^2 + \mathrm{trace}(A^2_{\mathbf{H}})+ \| \nabla^{\perp}\mathbf{H}\| ^{2} \Big]dV}{\displaystyle{\int}_{M}\Big[ m \langle a, \mathbf{H}\rangle ^2 + \| \mathbf{H}\|^2 \Big]dV}.
\end{equation}
The equality holds for some unit timelike vector $a\in \L^m$ if and only if $M$ is inmersed in a De Sitter space of radius $\sqrt{n/\lambda_1}$ with zero mean curvature vector field. In particular, for $m=n+2$, the equality holds if and only if $M$ is a totally geodesic submanifold in a De Sitter space of radius $\sqrt{n/\lambda_1}$.
\end{proposition}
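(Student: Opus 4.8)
The plan is to specialize Lemma \ref{main-lemma} to the $\lambda_1$-test vector field $W=\mathbf{H}$, which is legitimate by the Beltrami equation (\ref{beltrami}). Since $\mathbf{H}\in\mathfrak{X}^{\perp}(M)$ we have $W^{T}=0$ and $W^{\perp}=\mathbf{H}$, so the general formula (\ref{f45}) collapses to (\ref{f50}), namely $\mathrm{trace}(A_{Q_1^x})=\mathrm{trace}(A^2_{\mathbf{H}})(x)+\|\nabla^{\perp}\mathbf{H}\|^{2}(x)$, while $\langle a,W\rangle=\langle a,\mathbf{H}\rangle$. Substituting these two facts into (\ref{tecnico}) yields exactly (\ref{f60}) after dividing by the denominator, and the denominator $\int_M\big[m\langle a,\mathbf{H}\rangle^{2}+\|\mathbf{H}\|^{2}\big]\,dV$ is strictly positive: by Remark \ref{Wnon-zero} it vanishes only if $\mathbf{H}\equiv 0$, which is impossible for a compact spacelike submanifold of $\L^m$.

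For the equality discussion, Lemma \ref{main-lemma} says that equality in (\ref{f60}) holds for some (equivalently, for every) unit timelike $a$ if and only if $\Delta\mathbf{H}+\lambda_1\mathbf{H}=0$. Using $\Delta\psi=n\mathbf{H}$ this is $\Delta(\Delta\psi+\lambda_1\psi)=0$, so the $\R^m$-valued map $\Delta\psi+\lambda_1\psi$ is harmonic on the compact $M$, hence equals a constant vector $c\in\L^m$. Replacing $\psi$ by $\psi-\tfrac{1}{\lambda_1}c$ (a translation, which affects neither the induced metric nor $\mathbf{H}$) we may assume $\Delta\psi+\lambda_1\psi=0$, that is $n\mathbf{H}=\Delta\psi=-\lambda_1\psi$, so $\mathbf{H}=-\tfrac{\lambda_1}{n}\psi$. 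As $\mathbf{H}$ is normal this forces $\psi^{T}=0$; since $\nabla\|\psi\|^{2}=2\psi^{T}$, the function $\|\psi\|^{2}$ is constant, and then $0=\Delta\|\psi\|^{2}=2n+2n\langle\psi,\mathbf{H}\rangle=2n-2\lambda_1\|\psi\|^{2}$ gives $\|\psi\|^{2}=n/\lambda_1$. Hence $\psi(M)$ lies in the de Sitter space $\S^{m-1}_1(\sqrt{n/\lambda_1})=\{x\in\L^m:\langle x,x\rangle=n/\lambda_1\}$. This hypersurface is totally umbilical in $\L^m$ with unit normal $\psi/\|\psi\|$ and second fundamental form $-\tfrac{1}{\|\psi\|^{2}}\langle X,Y\rangle\,\psi$, so its contribution to the mean curvature of $M$ along the $M$-directions is $-\tfrac{1}{\|\psi\|^{2}}\psi=-\tfrac{\lambda_1}{n}\psi$; therefore the mean curvature vector of $M$ inside $\S^{m-1}_1(\sqrt{n/\lambda_1})$ equals $\mathbf{H}-\big(-\tfrac{\lambda_1}{n}\psi\big)=0$, i.e. $M$ has vanishing mean curvature vector field there.

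Conversely, if $M$ is immersed with zero mean curvature vector in a de Sitter space of radius $r=\sqrt{n/\lambda_1}$, which (translating) we center at the origin, the same umbilicity computation gives $\mathbf{H}=-\tfrac{1}{r^{2}}\psi=-\tfrac{\lambda_1}{n}\psi$, so $\Delta\mathbf{H}=-\tfrac{\lambda_1}{n}\Delta\psi=-\lambda_1\mathbf{H}$ and Lemma \ref{main-lemma} yields equality in (\ref{f60}). For the last assertion, when $m=n+2$ the de Sitter space $\S^{n+1}_1(r)$ is a hypersurface of $\L^{n+2}$, so $M^n$ is a compact spacelike hypersurface of $\S^{n+1}_1(r)$ with $H=0$; by the classical rigidity of compact spacelike hypersurfaces of de Sitter space with constant mean curvature (trivial for $n=1$, Montiel's theorem for $n\geq2$) such an $M$ is totally umbilical, and being umbilical with $H=0$ it is totally geodesic, i.e. a round $n$-sphere of radius $r$ in $\S^{n+1}_1(r)$; the reverse implication is immediate since a totally geodesic submanifold has zero mean curvature vector.

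The substitutions of the first paragraph and the elementary identities for $\|\psi\|^{2}$ are routine; the real content is the equality analysis. The points demanding care are the passage from $\Delta\mathbf{H}+\lambda_1\mathbf{H}=0$ to a genuinely constant $c$ together with the argument that the translation normalization is harmless, and the correct splitting of $\mathbf{H}$ into its de Sitter part and its ambient (umbilical) part, for which one needs the second fundamental form of $\S^{m-1}_1(r)\subset\L^m$. For the codimension-one refinement the only external input is the classification of compact spacelike hypersurfaces of de Sitter space with constant mean curvature, which I would cite rather than reprove.
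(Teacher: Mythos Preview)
Your proof is correct and follows the paper's overall strategy: specialize Lemma~\ref{main-lemma} to $W=\mathbf{H}$, use (\ref{f50}) and Remark~\ref{Wnon-zero} for the inequality, and for the equality case reduce $\Delta\mathbf{H}+\lambda_1\mathbf{H}=0$ to $\Delta\psi+\lambda_1\psi=\text{const}$ via harmonicity on a compact manifold, then translate.

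The one genuine difference is in how you handle the passage between $\Delta\psi+\lambda_1\psi=0$ and ``minimal in the de Sitter space of radius $\sqrt{n/\lambda_1}$''. The paper simply invokes the semi-Riemannian Takahashi-type result of Markvorsen \cite[Theorem~1]{Mark} in both directions. You instead carry this equivalence out by hand: from $\mathbf{H}=-(\lambda_1/n)\psi$ you deduce $\psi^T=0$, hence $\|\psi\|^2$ constant, hence $\|\psi\|^2=n/\lambda_1$, and then split $\mathbf{H}$ using the explicit umbilical second fundamental form of $\S^{m-1}_1(r)\subset\L^m$. Your argument is more self-contained and makes the geometry transparent; the paper's is shorter but leans on an external reference. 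For the codimension-one refinement the paper cites Ishihara \cite[Theorem~1.1]{Ishi} on maximal spacelike submanifolds, whereas you appeal to Montiel's classification of compact constant-mean-curvature spacelike hypersurfaces in de Sitter space; either citation suffices, since $H=0$ is covered by both.
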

\begin{proof}
The inequality (\ref{f60}) is a direct consequence of Lemma \ref{main-lemma} and formula (\ref{f50}). Recall at this point that there is no compact spacelike submanifold in $\L^m$ with $\mathbf{H}=0$ and therefore Remark \ref{Wnon-zero} may be applied. If the equality holds in (\ref{f60}), then Lemma \ref{main-lemma} may be called, and by using the Beltrami equation (\ref{beltrami}) we have 
$$
\Delta \mathbf{H} +\lambda_{1}\mathbf{H}=\frac{1}{n}\Delta \Big[\Delta \psi+ \lambda_{1}\psi \Big]=0.
$$
Taking into account that $M$ is compact, we arrive to $\Delta \psi+ \lambda_{1}\psi=b\in \L^m$.

Let us consider now $\widehat{\psi}=\psi-b/\lambda_{1} $, that is, $\widehat{\psi}$ is the translation of $\psi$ by the vector $-b/\lambda_{1}$. Thus, we have $\Delta \widehat{\psi}+ \lambda_{1}\widehat{\psi}=0.$
Then, from the Semi-Riemannian version of the Takahashi result \cite[Theorem 1]{Mark}, one deduces that $\psi$ realizes an inmersion with zero mean curvature vector field in the De Sitter space of radius $\sqrt{n/\lambda_1}$ and center located at $b/\lambda_{1}$ in $\L^m$. Conversely, if $M$ is a spacelike submanifold in a De Sitter space of radius $\sqrt{n/\lambda_1}$, with zero mean curvature vector field, then \cite[Theorem 1]{Mark} also applies to obtain that $\Delta \psi+ \lambda_{1}\psi=0$ (up to possibly a parallel displacement). Therefore, $\Delta \mathbf{H} +\lambda_{1}\mathbf{H}=0$ holds and the proof ends using Lemma \ref{main-lemma}. In the particular case $m=n+2$, the assertion is a direct application of \cite[Theorem 1.1]{Ishi}. 
\end{proof}

\begin{remark}
{\rm 
If we particularize previous Proposition for the case of a compact \linebreak $n$-dimensional spacelike submanifold $M$ in Lorentz-Minkowski spacetime $\L^m$ through a spacelike hyperplane $\Pi$, we get
\begin{equation}\label{f70}
\lambda_{1}\leq \frac{
\displaystyle{\int}_{ M} \Big[ \mathrm{trace}(A^2_{\mathbf{H}})+ \| \nabla^{\perp}\mathbf{H}\| ^{2} \Big]dV}{\displaystyle{\int}_{M}\| \mathbf{H}\|^2 dV}.
\end{equation}
The equality holds if and only if $M$ is a minimal submanifold in some hypersphere in $\Pi$ of radius $\sqrt{n/\lambda_1}$.
Actually, (\ref{f70}) gives an upper bound for the first eigenvalue of the Laplace operator for compact submanifolds in Euclidean spaces. In order to compare (\ref{f70}) with Reilly inequality (\ref{reilly}), we consider the following string of inequalities
$$
\mathrm{trace}(A^2_{\mathbf{H}}) \geq \frac{1}{n}(\mathrm{trace}A_{\mathbf{H}})^2 \geq n\| \mathbf{H}\|^4.
$$
Now, the Cauchy-Schwarz inequality for integrals gives, 
$$
 \frac{
\displaystyle{\int}_{ M} \Big[ \mathrm{trace}(A^2_{\mathbf{H}})+ \| \nabla^{\perp}\mathbf{H}\| ^{2} \Big]dV}{\displaystyle{\int}_{M}\| \mathbf{H}\|^2 dV} \geq n \frac{
\displaystyle{\int}_{ M} \| \mathbf{H}\| ^{4}dV}{\displaystyle{\int}_{M}\| \mathbf{H}\|^2 dV}
\geq n\frac{\Big(\displaystyle{\int}_{M}\|\mathbf{H}\|^2\,dV\Big)^2}{\mathrm{Vol}(M)\displaystyle{\int}_{M}\|\mathbf{H}\|^2\,dV}=
n\frac{\displaystyle{\int}_{M}\|\mathbf{H}\|^2\,dV}{\mathrm{Vol}(M)}.
$$
Therefore, integral inequality (\ref{f70}) is weaker than (\ref{reilly}), in general. Moreover, the inequality (\ref{f70}) is just inequality (\ref{reilly}) if and only $M^n$ is a totally umbilical round sphere in a spacelike affine hyperplane of $\L^m$ \cite[Theorem 4.3]{AER}.
}
\end{remark}

Next assume that the center of gravity of the compact spacelike immersion $\psi: M^{n}\to \L^{m}$ is located at the origin. That is, the $j$-th component of $\psi$ satisfies $\int_{M}\psi_{j}dV=0$ for all $j=1,...,m$. Thus, the immersion $\psi$ is a $\lambda_{1}$-test vector field. Under this assumption, for $W=\psi$, with notation as in previous Section, we have for every $v\in \L^m$,
$$
\|\nabla F_{v}\|^2 (x)=\sum_{i=1}^{n} \Big( e_{i} \langle v , \psi\rangle  \Big)^{2}=
\sum_{i=1}^{n} \Big( \langle v , \overline{\nabla}_{e_{i}} \psi\rangle  \Big)^{2}=\sum_{i=1}^{n} \langle v ,e_{i} \rangle  ^{2}=\| v^{T}\|^2,
$$
where  $\{e_{1},...,e_{n}\}$ is an orthonormal basis of $T_{x}M$
and therefore
\begin{equation}\label{f2020}
\mathrm{trace}(A_{Q_{1}^{x}})=n
\end{equation}  for every $ x\in M.$
We are in a position to state,

\begin{lemma}\label{lemma2030}
The first eigenvalue $\lambda_1$ of the Laplace operator for a compact $n$-dimensional spacelike submanifold $M$ in Lorentz-Minkowski spacetime $\L^m$, with gravity center located at the origin, satisfies
\begin{equation}\label{f80}
\lambda_{1} \int_{M}\Big[ m \langle a, \psi \rangle ^2 + \| \psi \|^2 \Big]dV \leq n\mathrm{Vol}(M) +
m\int_{ M} \|  a^{T} \|^2 dV
\end{equation}
for every unit timelike vector $a\in \L^m$.
The equality holds for some $a\in \L^m$ (and then it holds for any unit timelike vector in $\L^m$) if and only if $M$ is inmersed in a De Sitter space of radius $\sqrt{n/\lambda_1}$ with zero mean curvature vector field.
\end{lemma}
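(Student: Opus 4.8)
The plan is to specialize Lemma \ref{main-lemma} to the choice $W=\psi$, which is a legitimate $\lambda_1$-test vector field precisely because the center of gravity of $\psi$ is assumed to lie at the origin, i.e.\ $\int_M\psi_j\,dV=0$ for every $j$, so that (\ref{functest}) holds. First I would assemble the two pieces already available in the excerpt: on one hand, for any $v\in\L^m$ one has $v^T=\nabla\langle v,\psi\rangle$, hence $\|\nabla\langle a,\psi\rangle\|^2=\|a^T\|^2$; on the other hand, the computation recorded in (\ref{f2020}) gives $\mathrm{trace}(A_{Q_1^x})=n$ at every point $x\in M$. Substituting $W=\psi$ together with these two identities into inequality (\ref{tecnico}) immediately yields
$$n\,\mathrm{Vol}(M)+m\int_M\|a^T\|^2\,dV\ \geq\ \lambda_1\int_M\big[m\langle a,\psi\rangle^2+\|\psi\|^2\big]\,dV,$$
which is exactly (\ref{f80}).

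For the equality statement I would invoke the last assertion of Lemma \ref{main-lemma}: equality holds for some, and then every, unit timelike $a\in\L^m$ if and only if $\Delta\psi+\lambda_1\psi=0$. Since the center of gravity is already at the origin, this identity needs no preliminary translation, and the semi-Riemannian Takahashi theorem \cite[Theorem 1]{Mark} applies directly: a compact spacelike immersion with $\Delta\psi+\lambda_1\psi=0$ is the same thing as an immersion with vanishing mean curvature vector field into the De Sitter space of radius $\sqrt{n/\lambda_1}$ centered at the origin. Conversely, if $M$ is immersed with zero mean curvature vector field in such a De Sitter space, the same theorem returns $\Delta\psi+\lambda_1\psi=0$ (up to a parallel displacement, trivial here), so equality holds in (\ref{f80}) by Lemma \ref{main-lemma}. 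This parallels verbatim the equality argument already used in the proof of Proposition \ref{H1}.

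There is no genuine obstacle here: the statement is purely a specialization of Lemma \ref{main-lemma}, and the only substantive computation---the evaluation $\mathrm{trace}(A_{Q_1^x})=n$---has already been carried out in the text preceding the statement. The one point that deserves a line of care is the equality characterization: one must check that the De Sitter space furnished by Takahashi's theorem is automatically the one centered at the origin, which is forced here by the center-of-gravity hypothesis, so that the statement contains no hidden ambiguity of translation.
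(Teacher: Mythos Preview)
Your proof is correct and follows essentially the same line as the paper's own argument: specialize Lemma \ref{main-lemma} to $W=\psi$, use $a^{T}=\nabla\langle a,\psi\rangle$ and the precomputed $\mathrm{trace}(A_{Q_1^x})=n$ to obtain (\ref{f80}), and then invoke \cite[Theorem 1]{Mark} for the equality characterization. Your additional remark that the De Sitter space is automatically centered at the origin because of the center-of-gravity hypothesis is a harmless clarification not made explicit in the paper.
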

\begin{proof} Taking into account that $a^{T}=\nabla \langle a ,\psi \rangle$ and (\ref{f2020}), Lemma \ref{main-lemma} implies inequality (\ref{f80}) with equality if and  only if $\Delta \psi +\lambda_{1}\psi=0$. Now, semi-Riemannian version of the Takahashi result \cite[Theorem 1]{Mark} can be again claimed to deduce that the equation $\Delta \psi +\lambda_{1}\psi=0$ is satisfied if and only if $\psi$ realizes a spacelike immersion with zero mean curvature vector field in the De Sitter space of radius $\sqrt{n/\lambda_1}$. 
\end{proof}

Next, we derive a family of $\lambda_1$-test vector fields from each compact spacelike compact immersion $\psi: M^{n}\to \L^{m}$ as follows. For every unit timelike vector $a\in \L^m$, consider 
\begin{equation}\label{f2010}
\psi_{a}:=\psi+\langle \psi, a \rangle a\in \mathfrak{X}_{\psi}(M),
\end{equation} 
that is, $\psi_a$ is the orthogonal projection of $\psi$ on the spacelike hyperplane $\E^{m-1}_{a}:=a^{\perp}$. 
Assume that the center of gravity of $\psi$ is located at the origin, then every $\psi_{a}$ is also a $\lambda_{1}$-test vector field. 
In the terminology of previous Section, for $W=\psi_{a}$ and $\{e_{1},...,e_{n}\}$ an orthonormal basis of $T_{x}M$, we have 
$$\|\nabla F_{v}\|^2 (x)=\sum_{i=1}^{n} \Big( e_{i} \langle v , \psi+\langle \psi, a \rangle a\rangle  \Big)^{2}=
\sum_{i=1}^{n} \Big( \langle v , \overline{\nabla}_{e_{i}} \psi\rangle + \langle \overline{\nabla}_{e_{i}}\psi, a \rangle \langle a , v \rangle \Big)^{2}
$$
$$\qquad \qquad \qquad \quad \quad =\sum_{i=1}^{n} \Big( \langle v , e_{i} \rangle + \langle e_{i}, a \rangle \langle a , v \rangle \Big)^{2}=\| v^{T}\|^2 + \| a^{T}\| ^2 \langle a , v \rangle^{2} + 2 \langle a , v \rangle \langle v^{T}, a^{T} \rangle,$$ 
for every $v\in \L^m$.
Therefore, this formula gives
\begin{equation}\label{f90}
\mathrm{trace}(A_{Q_{1}})=n- \| a^{T}\| ^2 +2\| a^{T}\| ^2 =n +\| a^{T}\| ^2.
\end{equation}

\begin{lemma}\label{lemma4}
For every unit timelike vector $a\in \L^m$, the first eigenvalue $\lambda_1$ of the Laplace operator for a compact $n$-dimensional spacelike submanifold $M$ in Lorentz-Minkowski spacetime $\L^m$ with gravity center located at the origin satisfies
\begin{equation}\label{f100}
n\mathrm{Vol}(M)+\int_{ M} \| a^{T}\| ^2 dV \geq \lambda_{1}\int_{M}\Big( \| \psi \|^2 + \langle a, \psi\rangle^2 \Big)dV
\end{equation}
The equality holds if and only if $\Delta \psi + \lambda_{1} \psi=-\langle a, \Delta \psi+ \lambda_{1}\psi\rangle a$.
\end{lemma}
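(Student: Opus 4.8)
The plan is to apply Lemma~\ref{main-lemma} directly to the $\lambda_1$-test vector field $W=\psi_a=\psi+\langle\psi,a\rangle a$ introduced in (\ref{f2010}). First I would check that $\psi_a$ is indeed a $\lambda_1$-test vector field: for each $v\in\L^m$ one has $\langle v,\psi_a\rangle=\langle v,\psi\rangle+\langle a,v\rangle\langle\psi,a\rangle$, and the hypothesis that the center of gravity of $\psi$ lies at the origin (i.e. $\int_M\psi\,dV=0$) forces both summands to integrate to zero over $M$, so that (\ref{functest}) holds.

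The decisive observation is that, since $\langle a,a\rangle=-1$,
$$\langle a,W\rangle=\langle a,\psi\rangle+\langle\psi,a\rangle\langle a,a\rangle=0$$
identically on $M$. Hence in the general inequality (\ref{tecnico}) the term $m\|\nabla\langle a,W\rangle\|^2$ on the left and the term $m\langle a,W\rangle^2$ on the right both vanish. Next I would record the elementary computation
$$\|W\|^2=\|\psi\|^2+2\langle\psi,a\rangle^2+\langle\psi,a\rangle^2\langle a,a\rangle=\|\psi\|^2+\langle a,\psi\rangle^2,$$
together with the value $\mathrm{trace}(A_{Q_{1}})=n+\|a^T\|^2$ already obtained in (\ref{f90}) for the choice $W=\psi_a$, so that the left-hand integrand of (\ref{tecnico}) collapses to $n+\|a^T\|^2$. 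Substituting these three evaluations into (\ref{tecnico}) gives exactly (\ref{f100}).

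For the equality statement, Lemma~\ref{main-lemma} says that (\ref{f100}) is an equality if and only if $\Delta W+\lambda_1 W=0$. Since $a$ is a constant vector, $\Delta\langle\psi,a\rangle=\langle\Delta\psi,a\rangle$, whence
$$\Delta W+\lambda_1 W=(\Delta\psi+\lambda_1\psi)+\langle\Delta\psi+\lambda_1\psi,a\rangle a;$$
setting this equal to zero yields precisely $\Delta\psi+\lambda_1\psi=-\langle a,\Delta\psi+\lambda_1\psi\rangle a$, as asserted.

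There is no serious obstacle here: the analytic content is entirely contained in Lemma~\ref{main-lemma} and in the trace computation leading to (\ref{f90}). The only two points that require a little attention are verifying that $\psi_a$ is a genuine $\lambda_1$-test vector field — which is exactly where the normalization of the center of gravity enters — and noticing the cancellation $\langle a,\psi_a\rangle\equiv0$, which is what reduces the general inequality (\ref{tecnico}) to the clean form (\ref{f100}).
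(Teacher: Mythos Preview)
Your proof is correct and follows exactly the paper's approach: apply Lemma~\ref{main-lemma} to the $\lambda_1$-test vector field $W=\psi_a$ and use the trace formula~(\ref{f90}). You have simply made explicit the identities $\langle a,\psi_a\rangle=0$ and $\|\psi_a\|^2=\|\psi\|^2+\langle a,\psi\rangle^2$ that the paper leaves implicit when it says the inequality ``is a direct consequence from (\ref{f90}) and Lemma~\ref{main-lemma}''.
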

\begin{proof}
The vector field $\psi_{a}$ is a $\lambda_{1}$-test vector field. Hence the inequality (\ref{f100}) is a direct consequence from (\ref{f90}) and Lemma \ref{main-lemma}. The equality holds in (\ref{f100})
if and  only if $\Delta \psi_{a} +\lambda_{1}\psi_{a}=0$, or in an equivalent way $\Delta \psi + \lambda_{1} \psi=-\langle a, \Delta \psi+ \lambda_{1}\psi\rangle a$.
\end{proof}

\begin{remark}
{\rm If we have $\psi=\psi_a$ for some $a$, i.e., $\psi(M)\subset \E^{m-1}_a$, then conclusions in Lemma \ref{lemma2030} and \ref{lemma4}
are the same 
$$
\lambda_1 \int_{M} \| \psi \|^2\, dV \leq n\mathrm{Vol}(M),
$$
which gives the main Lemma in \cite{Re}.
}
\end{remark}

We assume one more time that the center of gravity of the compact spacelike immersion $\psi: M^{n}\to \L^{m}$ is located at the origin. Then, the Minimum Principle (\ref{minimum}) implies that the symmetric bilinear form on $\L^m$ defined by
$$
Q(v,w):=\int_{x\in M}\langle \nabla F_{v}(x), \nabla F_{w}(x) \rangle\, dV- \lambda_{1}\int_{x\in M}F_{v}(x)F_{w}(x)\, dV
$$
is positive semi-definite where $F_{v}(x)=\langle v , \psi (x)\rangle$. Therefore, for a vector $v\in \L^{m}$ the conditions $Q(v,v)=0$ and $Q(v,w)=0$ for all $w\in \L^m$ are equivalent.

The next result provides a sufficient condition in order to assure that a compact spacelike submanifold satisfies inequality (\ref{reilly}).

\begin{proposition}\label{elle}
Given a compact $n$-dimensional spacelike submanifold $M$ in Lorentz-Minkowski spacetime $\L^m$ with gravity center located at the origin,  assume there exists a causal vector $\ell \in \L^{m}$ $($i.e., $\|\ell\|^2\leq 0$ and $\ell \neq 0$ $)$ such that $Q(\ell, \ell)=0.$ Then, the first eigenvalue $\lambda_1$ of the Laplace operator of $M$ satisfies
$$
\lambda_{1}\leq n\frac{
\displaystyle{\int}_{ M} \|  \mathbf{H}\| ^{2}\, dV}{\mathrm{Vol}(M)}.
$$
The equality holds if and only if $\| \Delta \psi +\lambda_{1}\psi \|^2=0$ and  $\lambda_{1}\int_{M} \| \psi\|^2 \, dV = n \mathrm{Vol}(M)$.
\end{proposition}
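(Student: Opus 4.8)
The plan is to exploit the key hypothesis $Q(\ell,\ell)=0$ for a causal vector $\ell$. Since $Q$ is positive semi-definite (by the Minimum Principle applied to the $\lambda_1$-test vector field $\psi$, as observed just before the statement), the vanishing $Q(\ell,\ell)=0$ forces $Q(\ell,w)=0$ for \emph{every} $w\in\L^m$. Unwinding the definition of $Q$ with $F_v(x)=\langle v,\psi(x)\rangle$, this says
\[
\int_M \langle \nabla F_\ell, \nabla F_w\rangle\, dV = \lambda_1 \int_M F_\ell\, F_w\, dV \qquad \text{for all } w\in\L^m.
\]
Since $\nabla F_v = v^T = \nabla\langle v,\psi\rangle$, integrating by parts turns the left side into $-\int_M F_w\,\Delta F_\ell\, dV$, so $\int_M \langle w,\, \Delta\psi + \lambda_1\psi\rangle F_w\,dV$-type manipulations give $\langle w, \Delta\psi+\lambda_1\psi\rangle$ paired against $F_\ell$; more directly, the identity above for all $w$ is equivalent (as in the proof of Lemma \ref{main-lemma}) to $\Delta F_\ell + \lambda_1 F_\ell = 0$, i.e. $\langle \ell,\, \Delta\psi + \lambda_1\psi\rangle = 0$ pointwise on $M$.

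Next I would combine this with the Beltrami equation $\Delta\psi = n\mathbf{H}$. Writing $\phi := \Delta\psi + \lambda_1\psi = n\mathbf{H} + \lambda_1\psi$, we have $\langle \ell, \phi\rangle \equiv 0$ on $M$. The goal is an upper bound $\lambda_1 \le n\,\int_M\|\mathbf{H}\|^2\,dV / \mathrm{Vol}(M)$. Here the causal nature of $\ell$ enters decisively: $\ell$ spans a lightlike (or timelike) line, and $\phi$ being pointwise orthogonal to $\ell$ constrains $\|\phi\|^2$. If $\ell$ is null, then $\phi \in \ell^\perp$, and on the degenerate hyperplane $\ell^\perp$ (which contains $\ell$ itself) the induced metric is positive semi-definite, so $\|\phi\|^2 \ge 0$; if $\ell$ is timelike, $\ell^\perp$ is spacelike and again $\|\phi\|^2 \ge 0$. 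In either case we obtain the crucial sign $\|\Delta\psi + \lambda_1\psi\|^2 \ge 0$ on $M$, with equality exactly when $\phi$ is parallel to $\ell$ (null case) or $\phi=0$ (timelike case) — in both subcases $\|\phi\|^2 = 0$.

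Now expand and integrate:
\[
0 \le \int_M \|\Delta\psi + \lambda_1\psi\|^2\, dV = \int_M \Big( \|\Delta\psi\|^2 + 2\lambda_1\langle \Delta\psi,\psi\rangle + \lambda_1^2\|\psi\|^2 \Big)\, dV.
\]
Using $\|\Delta\psi\|^2 = n^2\|\mathbf{H}\|^2$ and the integration-by-parts identity $\int_M \langle \Delta\psi,\psi\rangle\, dV = -\int_M \|\nabla\psi\|^2\, dV = -n\,\mathrm{Vol}(M)$ (the last equality because $\|\nabla\psi\|^2 = \sum_i \|e_i\|^2 = n$, as computed for $W=\psi$ in Section 6), the middle term becomes $-2\lambda_1 n\,\mathrm{Vol}(M)$. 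Meanwhile the Minimum Principle applied to each component (with center of gravity at the origin) gives $\lambda_1 \int_M \|\psi\|^2\,dV \le \int_M \|\nabla\psi\|^2\,dV = n\,\mathrm{Vol}(M)$, hence $\lambda_1^2\int_M\|\psi\|^2\,dV \le \lambda_1 n\,\mathrm{Vol}(M)$. Substituting,
\[
0 \le n^2\int_M\|\mathbf{H}\|^2\,dV - 2\lambda_1 n\,\mathrm{Vol}(M) + \lambda_1 n\,\mathrm{Vol}(M) = n^2\int_M\|\mathbf{H}\|^2\,dV - \lambda_1 n\,\mathrm{Vol}(M),
\]
which rearranges to the claimed inequality $\lambda_1 \le n\int_M\|\mathbf{H}\|^2\,dV/\mathrm{Vol}(M)$. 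For equality, both inequalities used must be equalities: $\int_M\|\Delta\psi+\lambda_1\psi\|^2\,dV = 0$ and $\lambda_1\int_M\|\psi\|^2\,dV = n\,\mathrm{Vol}(M)$, which is exactly the stated characterization. The main obstacle I anticipate is the sign argument for $\|\Delta\psi + \lambda_1\psi\|^2 \ge 0$ in the null case — one must argue cleanly that $\phi\in\ell^\perp$ together with $\phi$ being a limit/combination forces nonnegative causal character, i.e. that a vector orthogonal to a null vector $\ell$ cannot be timelike; this is standard Lorentzian linear algebra but deserves an explicit line.
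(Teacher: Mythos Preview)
Your argument is correct up to the crucial step where you claim that ``the Minimum Principle applied to each component gives $\lambda_1 \int_M \|\psi\|^2\,dV \le \int_M \|\nabla\psi\|^2\,dV = n\,\mathrm{Vol}(M)$''. This is the gap. The Minimum Principle gives $\lambda_1\int_M \psi_j^2\,dV \le \int_M \|\nabla\psi_j\|^2\,dV$ for each $j$, but $\|\psi\|^2 = -\psi_1^2 + \psi_2^2 + \cdots + \psi_m^2$ carries a \emph{minus} sign on the first component; multiplying the $j=1$ inequality by $-1$ reverses it, and you can no longer add the $m$ inequalities in the same direction. In fact the inequality $\lambda_1\int_M\|\psi\|^2\,dV \le n\,\mathrm{Vol}(M)$ is false in general: the counter-example of Section~3 (after translating the center of gravity to the origin) violates it already for $n=1$. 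So this step genuinely requires the hypothesis $Q(\ell,\ell)=0$, which you have not used here.

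The paper obtains exactly this inequality from the $\ell$-hypothesis, but indirectly: it applies Lemma~\ref{lemma4} to the unit timelike vectors $a_k = w_k/\sqrt{-\|w_k\|^2}$ with $w_k = \ell + \tfrac{1}{k}a$ (for a fixed unit timelike $a$ with $\langle a,\ell\rangle<0$), which gives $Q(a_k,a_k) \ge \lambda_1\int_M\|\psi\|^2\,dV - n\,\mathrm{Vol}(M)$, and then shows $Q(a_k,a_k)\to 0$ as $k\to\infty$. Everything else in your outline --- deducing $\langle \ell, n\mathbf{H}+\lambda_1\psi\rangle \equiv 0$ from $Q(\ell,\ell)=0$, the Lorentzian linear algebra giving $\|n\mathbf{H}+\lambda_1\psi\|^2\ge 0$, the expansion via the Minkowski formula, and the equality discussion --- matches the paper and is fine.
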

\begin{proof}
From the Beltrami equation (\ref{beltrami}), the assumption $Q(\ell, \ell)=0$ is equivalent to \linebreak $\langle \ell , n\mathbf{H}+ \lambda_{1}\psi \rangle =0$, and therefore $\| n\mathbf{H}+ \lambda_{1}\psi \|^{2}\geq 0$ holds. 

On the other hand, the Minkowski formula (\ref{1040})
gives 
\begin{equation}\label{1050}
\int_{M}\| n\mathbf{H}+ \lambda_{1}\psi \|^{2}\, dV =n^2\int_{M}\|  \mathbf{H}\| ^{2}\, dV + \lambda_{1}^{2}\int_{M}\|  \psi\| ^{2}\, dV- 2 \lambda_{1}n \mathrm{Vol}(M) \geq 0.
\end{equation}
Now, let us fix $a\in \L^m$, unit timelike vector, such that $\langle a, \ell \rangle < 0$ and define the sequence of timelike vectors
$
w_{k}:=\ell + \frac{1}{k}a.
$
We are now ready to apply Lemma \ref{lemma4} for every \linebreak $a_{k}:=\frac{1}{\sqrt{- \| w_{k} \|^{2}}}w_{k}$ to obtain
$$
n\mathrm{Vol}(M)+\int_{ M} \| a_{k}^{T}\| ^2 dV \geq \lambda_{1}\int_{M}\Big( \| \psi \|^2 + \langle a_{k}, \psi\rangle^2 \Big)dV.
$$
In other words, we have
$$
Q(a_{k}, a_{k}) \geq \lambda_{1}\int_{M} \| \psi \|^2 \, dV - n \mathrm{Vol}(M) \quad \mathrm{for\,\,\, all} \quad k\geq 1.
$$
We claim that $\lim_{k\to \infty}Q(a_{k}, a_{k})=0$. In fact, a strightfoward computation shows
$$
\lim_{k\to \infty}Q(a_{k}, a_{k})=-\lim_{k\to \infty}\frac{Q(a,a)}{k^{2} \| \ell  \|^{2} + 2k\langle a, \ell \rangle -1}=0.
$$
Thus, we have $\lambda_{1}\int_{M} \| \psi \|^2 \, dV \leq n \mathrm{Vol}(M)$ and then from (\ref{1050})
the following string of inequalities
$$
0 \leq \int_{M}\| n\mathbf{H}+ \lambda_{1}\psi \|^{2}\, dV \leq n^2\int_{M}\|  \mathbf{H}\| ^{2}\, dV -  \lambda_{1}n \mathrm{Vol}(M).
$$ 
It remains only to show the equality case. Assume $n\int_{M}\|  \mathbf{H}\| ^{2}\, dV =  \lambda_{1} \mathrm{Vol}(M)$. Then it is not difficult to show that  $\| n\mathbf{H}+ \lambda_{1}\psi \|^2=0$ and $\lambda_{1}\int_{M} \| \psi \|^2 \, dV = n \mathrm{Vol}(M)$. The converse follows in a similar way.
\end{proof}

There are two natural families of compact spacelike submanifolds satisfying the assumption in Proposition  \ref{elle}. Namely, submanifolds which factor through spacelike hyperplanes and submanifolds through lightlike hyperplanes. Although the two following Corollaries are a direct consequence of formula (\ref{reilly}), as announced in the Introduction, we derive now them from Proposition \ref{elle} for the sake of completeness. Note that Corollary \ref{gra} now includes a characterization of the equality condition.

\begin{corollary}
The first eigenvalue $\lambda_1$ of the Laplace operator for a compact $n$-dimensional spacelike submanifold $M$ in Lorentz-Minkowski spacetime $\L^m$, which factors through a spacelike hyperplane $\Pi$, satisfies
\begin{equation}\label{f110}
\lambda_{1}\leq n\, \frac{
\displaystyle{\int}_{ M} \|  \mathbf{H}\| ^{2} dV}{\mathrm{Vol}(M)}.
\end{equation}
The equality holds if and only if $M$ is a minimal submanifold in some hypersphere in $\Pi$ of radius $\sqrt{n/\lambda_1}$ in $\Pi$.
\end{corollary}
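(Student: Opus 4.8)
The plan is to deduce this Corollary directly from Proposition \ref{elle}. Since $\lambda_1$, the mean curvature vector field $\mathbf{H}$ and $\mathrm{Vol}(M)$ are all unchanged under a translation of the immersion, and the center of gravity $\frac{1}{\mathrm{Vol}(M)}\int_M\psi\,dV$ lies in $\Pi$ (it is an average of points of $\psi(M)\subset\Pi$, and $\Pi$ is affine), we may first translate $\psi$ so that its center of gravity is located at the origin; after this translation $\Pi$ is a linear hyperplane, and being spacelike it has the form $\Pi=a^{\perp}$ for some unit timelike vector $a\in\L^m$. With this set-up $F_a(x)=\langle a,\psi(x)\rangle\equiv 0$ on $M$, so $Q(a,a)=0$, and since $a$ is timelike it is a causal vector. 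Proposition \ref{elle} then applies with $\ell=a$ and gives inequality (\ref{f110}).

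For the equality case, recall from Proposition \ref{elle} that equality in (\ref{f110}) is equivalent to $\|\Delta\psi+\lambda_1\psi\|^2=0$ together with $\lambda_1\int_M\|\psi\|^2\,dV=n\,\mathrm{Vol}(M)$. Here I would use that $\Pi$ is totally geodesic in $\L^m$, so the mean curvature vector field of $M$ takes values in $\Pi$; hence $\Delta\psi+\lambda_1\psi=n\mathbf{H}+\lambda_1\psi$ is a vector field with values in the \emph{positive definite} subspace $\Pi$, and therefore $\|\Delta\psi+\lambda_1\psi\|^2=0$ forces $\Delta\psi+\lambda_1\psi=0$. Conversely, if $\Delta\psi+\lambda_1\psi=0$ then the Minkowski formula (\ref{1040}) gives $\lambda_1\int_M\|\psi\|^2\,dV=n\,\mathrm{Vol}(M)$ automatically. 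Thus equality in (\ref{f110}) holds precisely when $\Delta\psi+\lambda_1\psi=0$. Identifying $\Pi$ with $\E^{m-1}$, this is exactly the situation of the classical Takahashi theorem, which rephrases it as $M$ being a minimal submanifold of a hypersphere of $\Pi$ of radius $\sqrt{n/\lambda_1}$ centered at the origin. For the reverse implication one translates so that this hypersphere is centered at the origin — a minimal submanifold of a round sphere has its center of gravity at the center of the sphere, because the components of $\psi$ minus the center are eigenfunctions for a positive eigenvalue — and then Takahashi's theorem yields $\Delta\psi+\lambda_1\psi=0$, so that equality holds.

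The only point requiring a little care is the upgrade from the null-length condition $\|\Delta\psi+\lambda_1\psi\|^2=0$ furnished by Proposition \ref{elle} to the genuine vanishing $\Delta\psi+\lambda_1\psi=0$; this is where the Euclidean nature of $\Pi$ is used in an essential way, and it is what allows the classical Takahashi theorem to be invoked in place of its semi-Riemannian counterpart. The remaining steps — the translation bookkeeping and the elementary consequences of the Minkowski formula — are routine.
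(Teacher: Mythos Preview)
Your proposal is correct and follows essentially the same route as the paper: translate so that the gravity center is at the origin, take $\ell=a$ the unit timelike normal to $\Pi$, and invoke Proposition~\ref{elle}. For the equality case the paper argues slightly more directly, using that $Q(a,a)=0$ already gives $\langle n\mathbf{H}+\lambda_1\psi,\,a\rangle=0$, hence $n\mathbf{H}+\lambda_1\psi\in a^\perp=\Pi$, and then the null-length condition forces vanishing; your justification via the total geodesicity of $\Pi$ reaches the same conclusion, and your treatment of the converse direction is in fact more explicit than the paper's.
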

\begin{proof}
Without loss of generality, may be assumed that the center of gravity of $\psi$ is located at the origin. Let us consider a normal unit timelike vector $a \in \L^m$ to $\Pi$.
The inequality (\ref{f110}) is a consequence of Proposition \ref{elle} applied to the vector $\ell=a$.
For the equality case, recall that $ \langle n\mathbf{H}+ \lambda_{1}\psi, \ell \rangle =0$ and $ \ell$ is timelike. Therofore, we derive that $n\mathbf{H}+ \lambda_{1}\psi= \Delta \psi +\lambda_{1}\psi =0$ and the classical Takahashi result \cite{Taka} can be applied to get that $M$ is immersed, with zero mean curvature, in some hypersphere of radius $\sqrt{n/\lambda_1}$ in $\Pi$.
\end{proof}

\begin{corollary}\label{gra}
The first eigenvalue $\lambda_1$ of the Laplace operator for a compact $n$-dimensional spacelike submanifold $M$ in Lorentz-Minkowski spacetime $\L^m$, which factors through a lightlike hyperplane $\Pi$ with lightlike nomal vector $\ell\in \L^m$, satisfies
\begin{equation}\nonumber
\lambda_{1}\leq n\, \frac{
\displaystyle{\int}_{ M} \|  \mathbf{H}\| ^{2} dV}{\mathrm{Vol}(M)}.
\end{equation}
The equality holds if and only if there is a function $\beta_{\ell}$ such that $ \Delta \psi +\lambda_{1}\psi =\beta_{\ell} \ell$ and  $\lambda_{1}\int_{M} \| \psi \|^2 \, dV = n \mathrm{Vol}(M)$.
\end{corollary}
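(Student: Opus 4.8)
The plan is to derive this from Proposition~\ref{elle}, exactly as in the previous Corollary, taking the causal vector there to be the lightlike normal $\ell$ itself, and then to sharpen the description of the equality case.

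First I would reduce to the situation in which the center of gravity of $\psi$ is at the origin; replacing $\psi$ by $\psi-c$ with $c=\mathrm{Vol}(M)^{-1}\int_{M}\psi\,dV$ is an ambient translation, so it changes neither $\mathbf{H}$, nor $\lambda_{1}$, nor $\mathrm{Vol}(M)$, and it carries $\Pi$ to a parallel lightlike hyperplane still having $\ell$ as normal. Since $\langle\ell,\psi\rangle$ is then constant on $M$ with mean value $\langle\ell,0\rangle=0$, that constant vanishes; hence $\psi(M)\subset\ell^{\perp}$, so $F_{\ell}=\langle\ell,\psi\rangle\equiv 0$ and in particular $Q(\ell,\ell)=\int_{M}\|\nabla F_{\ell}\|^{2}\,dV-\lambda_{1}\int_{M}F_{\ell}^{2}\,dV=0$. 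As a lightlike vector, $\ell$ is causal, so Proposition~\ref{elle} applies and already yields the stated inequality together with the equality characterization $\|\Delta\psi+\lambda_{1}\psi\|^{2}=0$ and $\lambda_{1}\int_{M}\|\psi\|^{2}\,dV=n\,\mathrm{Vol}(M)$.

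It remains only to recast the first equality condition in the stated form. Put $V:=\Delta\psi+\lambda_{1}\psi$. From $\langle\ell,\psi\rangle\equiv 0$ and the Beltrami equation~(\ref{beltrami}) we get $\langle\ell,\Delta\psi\rangle=\Delta\langle\ell,\psi\rangle=0$, hence $\langle\ell,V\rangle\equiv 0$; if moreover $\|V\|^{2}=0$, then at each point $V$ and $\ell$ are mutually orthogonal null vectors, so they span a totally isotropic subspace of $\L^{m}$, which can be at most one-dimensional. Therefore $V$ is pointwise proportional to $\ell$, and fixing any $b\in\L^{m}$ with $\langle\ell,b\rangle\neq0$ the function $\beta_{\ell}:=\langle V,b\rangle/\langle\ell,b\rangle$ is smooth and satisfies $\Delta\psi+\lambda_{1}\psi=\beta_{\ell}\,\ell$ on $M$ (it vanishes precisely where $V$ does). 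Conversely, if $\Delta\psi+\lambda_{1}\psi=\beta_{\ell}\,\ell$ for some function $\beta_{\ell}$ and $\lambda_{1}\int_{M}\|\psi\|^{2}\,dV=n\,\mathrm{Vol}(M)$, then $\|\Delta\psi+\lambda_{1}\psi\|^{2}=\beta_{\ell}^{2}\|\ell\|^{2}=0$, so both equality conditions of Proposition~\ref{elle} hold and equality is attained.

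I do not expect a serious obstacle: once the reduction to the origin is made, the statement is essentially a specialization of Proposition~\ref{elle}, the only non-formal ingredient being the Lorentzian linear-algebra fact that two orthogonal null vectors must be linearly dependent (otherwise they would span a degenerate plane whose radical, the unique null direction, they would both lie in), together with the routine check that the resulting proportionality factor $\beta_{\ell}$ is smooth.
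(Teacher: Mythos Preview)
Your proposal is correct and follows essentially the same route as the paper: reduce to gravity center at the origin, apply Proposition~\ref{elle} with the lightlike normal $\ell$ as the causal vector, and for the equality case use that $V=\Delta\psi+\lambda_{1}\psi$ is null and orthogonal to the null vector $\ell$, hence proportional to it. You have simply spelled out details the paper leaves implicit, namely why $Q(\ell,\ell)=0$ after translation, the Lorentzian linear-algebra fact forcing $V\parallel\ell$, the smoothness of $\beta_{\ell}$, and the converse direction.
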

\begin{proof}
We may also assume that the center of gravity of $\psi$ is located at the origin. 
The inequality result is a consequence of Proposition \ref{elle} applied to the vector $\ell\in \L^m$.
For the equality case, recall that $ \langle n\mathbf{H}+ \lambda_{1}\psi, \ell \rangle =0$ and $ \ell$ is lightlike. Therofore, in the equality case, we derive that $n\mathbf{H}+ \lambda_{1}\psi= \Delta \psi +\lambda_{1}\psi =\beta_{\ell}\ell$ for some $\beta_{\ell}\in C^{\infty}(M)$. 
\end{proof}

\begin{remark}
{\rm At this point it is natural to wonder if inequality (\ref{reilly}) holds only for compact spacelike submanifolds of $\L^m$ through a spacelike or lightlike hyperplane. As was mentioned in the Introduction, every compact spacelike surface in $\L^4$ through a lightlike cone satisfies the inequality (\ref{reilly}). Therefore, the answer is negative. Even more, for a compact spacelike surface $M^2$ through a lightcone in $\L^4$ the following conditions are equivalent \cite[ Section 4 and Theorem 5.4]{PaRo}: (1) $M^2$ has constant Gauss curvature, (2) $M^2$ is totally umbilical in $\L^4$ and (3) $M^2$ factors through a spacelike hyperplane.
Thus, a compact spacelike surface in $\L^4$ through a lightlike cone and not totally umbilical satisfies (\ref{reilly}) and does not factorizes through any spacelike or lightlike hyperplane.}
\end{remark}

Now we are in position to prove Proposition \ref{oldprincipal} and Theorem \ref{reprincipal}. These results can be thought as suitable alternatives to the Reilly inequality (\ref{reilly}) for compact spacelike submanifolds in $\L^m$. 
Given such a submanifold, we may assume, performing certain translation if it is necessary, its gravity center is located at the origin of $\L^m$. The original immersion and the translated one have the same mean curvature vector fields. Moreover, for every $v\in \L^{m}$, the corresponding tangent parts $v^{T}=\nabla \langle v , \psi\rangle$ also agree.
Therefore, all the quantities appearing in the following inequalities $\mathrm{(E)}$ and $\mathrm{(E^{*})}$ are independent of the
choice of origin. 

\begin{proposition}\label{oldprincipal}
For each unit timelike vector $a\in \L^{m}$, the first eigenvalue $\lambda_1$ of the Laplace operator of a compact $n$-dimensional spacelike submanifold $M$ in Lorentz-Minkowski spacetime $\L^m$, $m\geq n+2$, satisfies
$$
\hspace*{-3cm}\mathrm{(E)} \hspace*{3cm}\lambda_1 \leq n\,\frac{\displaystyle{\int}_{M}\|\mathbf{H}_a\|^2 \,dV}{\mathrm{Vol}(M) +\frac{1}{n} \displaystyle{\int}_{M}\| a^{T}\|^2\, dV}\,,
$$
where $\mathbf{H}_{a}=\mathbf{H} + \langle \mathbf{H}, a \rangle a$ is the orthogonal projection of the mean curvature vector field $\mathbf{H}$ on the spacelike hyperplane $a^{\perp}$, and $a^{T}\in \mathfrak{X}(M)$ is, at any point $p\in M$, the orthogonal projection of $a$ on $T_{p}M$. The equality in $\mathrm{(E)}$ holds if and only if there exists $\mu_{a}\in C^{\infty}(M)$ such that
$
\Delta \widehat{\psi} + \lambda_{1} \widehat{\psi}=\mu_{a} a$
where $\widehat{\psi}$ is given by means of formula $(${\rm \ref{hat}}$)$.
\end{proposition}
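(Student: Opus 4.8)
The plan is to run a Reilly-type argument, but \emph{after} projecting everything orthogonally onto the spacelike hyperplane $a^{\perp}$, where the ambient metric is positive definite; this is exactly what repairs the step of Reilly's proof that collapses in Lorentzian signature. First I would use the translation invariance noted right before the statement to assume the center of gravity of $\psi$ is at the origin, so $\widehat{\psi}=\psi$; then $\psi$, and hence $\psi_{a}=\psi+\langle\psi,a\rangle a$ and $\mathbf{H}_{a}=\mathbf{H}+\langle\mathbf{H},a\rangle a$ (notation of (\ref{f2010}) and the statement), are $\lambda_{1}$-test vector fields, and $\langle a,\psi_{a}\rangle=\langle a,\mathbf{H}_{a}\rangle=0$, so $\psi_{a}$ and $\mathbf{H}_{a}$ take values pointwise in $a^{\perp}$. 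Two ingredients drive the proof: (i) Lemma \ref{lemma4} applied to $W=\psi_{a}$, giving $\lambda_{1}\int_{M}\|\psi_{a}\|^{2}\,dV\le n\,\mathrm{Vol}(M)+\int_{M}\|a^{T}\|^{2}\,dV$, with equality iff $\Delta\psi_{a}+\lambda_{1}\psi_{a}=0$; and (ii) the fact that $\Delta$ commutes with the parallel map $x\mapsto x+\langle x,a\rangle a$ (constant coefficients, since $a$ is fixed), whence $\Delta\psi_{a}=\Delta\psi+\langle\Delta\psi,a\rangle a=n\mathbf{H}_{a}$ by Beltrami (\ref{beltrami}); in particular $n\mathbf{H}_{a}+\lambda_{1}\psi_{a}$ is a section of $a^{\perp}$, so $\int_{M}\|n\mathbf{H}_{a}+\lambda_{1}\psi_{a}\|^{2}\,dV\ge 0$, with equality iff $n\mathbf{H}_{a}+\lambda_{1}\psi_{a}\equiv 0$.

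Then I would record the two Minkowski-type identities needed to expand (ii). From $\Delta\|\psi\|^{2}=2n+2n\langle\psi,\mathbf{H}\rangle$ one gets (\ref{1040}), i.e. $\int_{M}\langle\psi,\mathbf{H}\rangle\,dV=-\mathrm{Vol}(M)$; and from $\int_{M}\Delta(\langle a,\psi\rangle^{2})\,dV=0$ together with $\nabla\langle a,\psi\rangle=a^{T}$ and $\Delta\langle a,\psi\rangle=n\langle a,\mathbf{H}\rangle$ one gets $\int_{M}\langle a,\psi\rangle\langle a,\mathbf{H}\rangle\,dV=-\tfrac1n\int_{M}\|a^{T}\|^{2}\,dV$. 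Adding these, $\int_{M}\langle\mathbf{H}_{a},\psi_{a}\rangle\,dV=-D$ where $D:=\mathrm{Vol}(M)+\tfrac1n\int_{M}\|a^{T}\|^{2}\,dV$. Expanding the square in (ii) gives $0\le n^{2}\int_{M}\|\mathbf{H}_{a}\|^{2}\,dV-2n\lambda_{1}D+\lambda_{1}^{2}\int_{M}\|\psi_{a}\|^{2}\,dV$; substituting $\lambda_{1}$ times inequality (i), namely $\lambda_{1}^{2}\int_{M}\|\psi_{a}\|^{2}\,dV\le \lambda_{1}nD$, collapses this to $0\le n^{2}\int_{M}\|\mathbf{H}_{a}\|^{2}\,dV-n\lambda_{1}D$. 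Dividing by the positive number $nD$ and using $\|\mathbf{H}_{a}\|^{2}=\|\mathbf{H}\|^{2}+\langle\mathbf{H},a\rangle^{2}$ yields exactly $\mathrm{(E)}$.

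For equality, $\mathrm{(E)}$ is an equality iff both (i) and (ii) are equalities; but since $\Delta\psi_{a}=n\mathbf{H}_{a}$, these two equality conditions coincide, both amounting to $n\mathbf{H}_{a}+\lambda_{1}\psi_{a}=0$. Writing $n\mathbf{H}_{a}+\lambda_{1}\psi_{a}=(\Delta\psi+\lambda_{1}\psi)+\langle\Delta\psi+\lambda_{1}\psi,a\rangle a$, this vanishes precisely when $\Delta\psi+\lambda_{1}\psi$ is pointwise proportional to $a$, i.e. $\Delta\widehat\psi+\lambda_{1}\widehat\psi=\mu_{a}a$ with $\mu_{a}:=-\langle\Delta\psi+\lambda_{1}\psi,a\rangle\in C^{\infty}(M)$; conversely that identity restores both equalities. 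I expect the only real subtlety to be conceptual rather than computational: noticing that $\psi_{a}$ and $\mathbf{H}_{a}$ are still $\lambda_{1}$-test, still obey a Beltrami relation $\Delta\psi_{a}=n\mathbf{H}_{a}$, and now live in a positive definite subspace, so that $\|n\mathbf{H}_{a}+\lambda_{1}\psi_{a}\|^{2}\ge 0$ holds although $\|n\mathbf{H}+\lambda_{1}\psi\|^{2}\ge 0$ fails in general (the latter failure being the mechanism behind the counterexample of Section \ref{example}). The rest is bookkeeping with Lemma \ref{lemma4} and the Minkowski identities.
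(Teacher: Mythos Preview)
Your proof is correct and follows essentially the same approach as the paper's: reduce to center of gravity at the origin, apply Lemma~\ref{lemma4} to $\psi_a$, compute $\int_M\langle\psi_a,\mathbf{H}_a\rangle\,dV=-D$ via the Minkowski identities, and combine. The only minor variation is that where the paper closes by invoking the Cauchy--Schwarz inequality $\big(\int_M\langle\psi_a,\mathbf{H}_a\rangle\,dV\big)^2\le\int_M\|\psi_a\|^2\,dV\cdot\int_M\|\mathbf{H}_a\|^2\,dV$ in $a^\perp$, you instead expand $\int_M\|n\mathbf{H}_a+\lambda_1\psi_a\|^2\,dV\ge 0$; these are interchangeable one-line steps exploiting positive-definiteness on $a^\perp$, and your equality analysis matches the paper's conclusion.
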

\begin{proof}
Without loss of generality, assume the gravity center of $\psi$ is located at the origin.
Thus, Lemma \ref{lemma4} gives
$$
n\mathrm{Vol}(M)+\int_{ M} \| a^{T}\| ^2 dV \geq \lambda_{1}\int_{M}\| {\psi}_{a}\|^2 dV\,,
$$
where $\psi_{a}$ is given in (\ref{f2010}). The equality holds for some unit timelike vector $a\in \L^m$ if and only if  $\Delta \psi+ \lambda_{1} \psi=\mu_{a} a$
where $\mu_{a}=-\langle a, \Delta \psi + \lambda_{1} \psi \rangle$.

Let us consider the orthogonal projection
\begin{equation}\label{1030}
\mathbf{H}_{a}= \mathbf{H}+\langle \mathbf{H}, a\rangle a\in \E^{m-1}_{a},
\end{equation}
where $\E^{m-1}_{a}=a^{\perp}$. Taking into account that $\mathbf{H}$ is a $\lambda_1$-test vector field, it is not difficult to deduce that $\mathbf{H}_{a}$ does not vanish identically and therefore $ \int_{M}\| \mathbf{H}_{a}\|^2 dV >0$.

Now the Cauchy-Schwarz inequalities for integrals and vectors on $\E^{m-1}_{a}$ give the following string of inequalities
$$
\Big(n\mathrm{Vol}(M)+\int_{ M} \| a^{T}\| ^2 dV\Big) \int_{M}\| \mathbf{H}_{a}\|^2 dV \geq
$$
\begin{equation}\label{formula1} 
\lambda_{1}\int_{M}\| \psi_{a}\|^2 dV \int_{M}\| \mathbf{H}_{a}\|^2 dV\geq \lambda_{1}\Big(\int_{M}\langle \psi_{a},  \mathbf{H}_{a}\rangle \Big)^{2}.
\end{equation}

As a direct application of Minkowski formula (\ref{1040}), we have
\begin{equation}\label{f3010}
\int_{M}\Big(1+\langle \psi_{a}, \mathbf{H}_{a}\rangle- \langle \psi,a \rangle  \langle \mathbf{H}, a\rangle\Big)dV=0,
\end{equation}
and, on the other hand, using again Beltrami equation (\ref{beltrami}),
$$\triangle  \langle \psi,a \rangle^{2}=2 n \langle \psi,a \rangle \langle \mathbf{H},a \rangle + 2\| a^{T}\| ^2$$ holds. Therefore, using the divergence Theorem and previous equation in (\ref{f3010}), we have
\begin{equation}\label{formula2} 
\int_{M}\langle \psi_{a},  \mathbf{H}_{a}\rangle=-\mathrm{Vol}(M)-\frac{1}{n}\int_{M}\| a^{T}\| ^2 dV.
\end{equation}

Finally, from (\ref{formula1}) and (\ref{formula2}) we obtain the desired inequality $\mathrm{(E)}$. The equality holds in (E) if and only if the equality holds
in Lemma \ref{lemma4} and this ends the proof.
\end{proof}

\begin{remark}
{\rm In more geometric terms, the equality condition in (E), via Beltrami equation (\ref{beltrami}), gives
$$
n\mathbf{H}=\mu_{a} a- \lambda_{1}\widehat{\psi}.
$$
Therefore, for $v\in T_{x}M$, we get
$
\overline{\nabla}_{v}\mathbf{H}=\frac{1}{n}v(\mu_{a} )a- \frac{\lambda_{1}}{n}v,
$
and thus, taking in mind Weingarten formula, we have
$$
A_{\mathbf{H}}=\frac{\lambda_{1}}{n}\mathrm{Id}-\frac{1}{n}d\mu_{a} \otimes a^{T} \quad \mathrm{and} \quad \nabla^{\perp}\mathbf{H}=\frac{1}{n}d\mu_{a} \otimes a^{N},
$$
where $a^{N}\in \mathfrak{X}^{\perp}(M)$ is, at any point $p\in M$, the orthogonal projection of $a$ on $T_{p}^{\perp}M$.
Hence, we compute
$$
n\| \mathbf{H}\|^{2}=\mathrm{trace}(A_{\mathbf{H}})=\lambda_{1}- \frac{1}{n}\langle \nabla\mu_{a}, a^{T} \rangle, \quad \nabla \| \mathbf{H}\|^2 = \frac{2}{n}\langle a^{N}, \mathbf{H}\rangle \nabla \mu_{a}. 
$$
and thus
$$
\Big{\|} \widehat{\psi}- \frac{\mu_{a}}{\lambda_{1}}\,a \Big{\|}^2 =\frac{n}{\lambda_{1}}- \frac{1}{\lambda_{1}^2}\langle \nabla\mu_{a}, a^{T} \rangle.$$
Now, observe that we have $\int_{M}\mu_{a}\, dV=0$ from the definition of $\mu_a$. 
Therefore, $\mu_a$ is constant if and only if 
$\mu_a =0$, in this case we have 
$\| \widehat{\psi}\|^2 =n/\lambda_{1}$ and $M$ is contained in a De Sitter space of radius $\sqrt{n/\lambda_{1}}$ with zero mean curvature vector field.
}
\end{remark}

\begin{remark}
{\rm In general, there is no unit timelike vector $a\in \L^m$
such that equality holds in $\mathrm{(E)}$ for a given compact spacelike submanifold. 
In fact, this is the case for the counter-example in Section 3, as we will explain now.

A direct computation from Beltrami equation (\ref{beltrami}) and formula (\ref{contraejeH}) shows
$$
\Delta \psi+ n\psi=\Big((1+n-t^2)\cosh(t)-nt\sinh(t),(1+n-t^2)\sinh(t)-nt\cosh(t),0, \cdots,0\Big),
$$
at any $(t,y)\in \S^n$.  We derive a contradiction as follows. Assume the equality condition for $\mathrm{(E)}$ is satisfied and let us write $\widehat{\psi}=\psi +b$ for a suitable $b\in \L^m$. The condition $\Delta \widehat{\psi} + n \widehat{\psi}=\mu_{a}a$ holds  for a unit timelike vector $a=(a_{1},... ,a_{m})\in \L^m$ if and only if
\begin{equation}\label{1000}
\Big((1+n-t^2)\cosh(t)-nt\sinh(t),(1+n-t^2)\sinh(t)-nt\cosh(t),0,\cdots,0\Big)+n b=\mu_{a}a.
\end{equation}
Taking into account that $a_1 \neq 0$, we get 
$$\mu_a =\frac{1}{a_{1}}\Big((1+n-t^2)\cosh(t)-nt\sinh(t)+n b_{1}\Big), \quad (t,y)\in \S^n ,
$$
and also 
$$a_{2}\mu_{a}=(1+n-t^2)\sinh(t)-nt\cosh(t)+n b_{2}.
$$
Therefore, a direct computation shows that the following function
$$
\Big[\frac{a_{2}}{a_{1}}(1+n-t^2)+ nt\Big]\cosh(t)-\Big[\frac{a_{2}}{a_{1}}nt+(1+n-t^2)\Big]\sinh(t), \quad t\in[-1,+1]
$$
must be constant, which is clearly a contradiction.
}
\end{remark}

As a direct consequence of Proposition \ref{oldprincipal} we arrive to the main result of this paper

\begin{theorem}\label{reprincipal}
For each unit timelike vector $a\in \L^{m}$, the first eigenvalue $\lambda_1$ of the Laplace operator for a compact $n$-dimensional spacelike submanifold $M$ in Lorentz-Minkowski spacetime $\L^m$, $m\geq n+2$, satisfies
$$
\hspace*{-5cm}\mathrm{(E^{*})} \hspace*{4cm}\lambda_1 \leq n\, \frac{\displaystyle{\int}_{M}\|\mathbf{H}_a\|^2 \,dV}{\mathrm{Vol}(M)}.\hspace*{-1cm}
$$
The equality holds if and only if  $M$ factors through a spacelike affine hyperplane $\Pi$ orthogonal to $a\in \L^m$ and $M$ is minimal in some hypersphere in $\Pi$ with radius $\sqrt{n/\lambda_1}$. 
\end{theorem}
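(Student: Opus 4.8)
The plan is to deduce Theorem \ref{reprincipal} from Proposition \ref{oldprincipal} by dropping the nonnegative term $\frac{1}{n}\int_M \|a^T\|^2\,dV$ from the denominator of (E). Since $\mathrm{Vol}(M)\le \mathrm{Vol}(M)+\frac1n\int_M\|a^T\|^2\,dV$, inequality (E) immediately gives the weaker bound $(\mathrm{E}^*)$. So the only real content is the equality discussion.

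For the equality case, I would argue as follows. If equality holds in $(\mathrm{E}^*)$, then a fortiori equality holds in (E) \emph{and} the discarded term vanishes, i.e. $\int_M\|a^T\|^2\,dV=0$, which forces $a^T\equiv 0$ on $M$. Geometrically this says $a$ is everywhere normal to $M$; equivalently $\langle a,\psi\rangle$ is constant, so $\psi(M)$ lies in an affine hyperplane $\Pi$ orthogonal to $a$ (after the translation placing the gravity center at the origin, one checks this constant can be taken to be $0$, so $\Pi=a^\perp=\E^{m-1}_a$). Now I invoke the equality clause of Proposition \ref{oldprincipal}: there is $\mu_a\in C^\infty(M)$ with $\Delta\widehat\psi+\lambda_1\widehat\psi=\mu_a a$. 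Since $M\subset\Pi$ and $\widehat\psi$ differs from $\psi$ by a constant vector lying in $\Pi$ (by the gravity-center normalization), both $\widehat\psi$ and $\Delta\widehat\psi$ are $\Pi$-valued, hence their combination $\mu_a a$ is simultaneously a multiple of $a$ and tangent to $\Pi\perp a$; therefore $\mu_a\equiv 0$ and $\Delta\widehat\psi+\lambda_1\widehat\psi=0$. Viewing $\widehat\psi:M\to\Pi\cong\E^{m-1}$ as an isometric immersion into Euclidean space, the classical Takahashi theorem \cite{Taka} applies: $M$ is minimally immersed in a hypersphere of $\Pi$ of radius $\sqrt{n/\lambda_1}$ centered at the origin.

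Conversely, suppose $M$ factors through a spacelike affine hyperplane $\Pi\perp a$ and sits minimally in a hypersphere of $\Pi$ of radius $\sqrt{n/\lambda_1}$. Then $a^T\equiv 0$, so $\int_M\|a^T\|^2\,dV=0$ and $(\mathrm{E}^*)$ and (E) have the same right-hand side; moreover, by Takahashi's theorem (after translating the center of the hypersphere to the origin) we have $\Delta\psi+\lambda_1\psi=0$, which is exactly the equality condition in Proposition \ref{oldprincipal} (with $\mu_a=0$). Hence equality holds in (E), and therefore in $(\mathrm{E}^*)$.

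The main obstacle I anticipate is the bookkeeping around the translations and the role of $\widehat\psi$ versus $\psi$: one must be careful that the normalization placing the gravity center at the origin is compatible with $\psi(M)\subset\Pi$ (so that $\Pi$ passes through the origin and $\widehat\psi=\psi$), and that Takahashi's theorem is being applied to the correct Euclidean immersion with the correct center for the sphere. None of this is deep, but it is where a sloppy argument could go wrong; everything else is a direct reading-off from Proposition \ref{oldprincipal} and the nonnegativity of $\|a^T\|^2$.
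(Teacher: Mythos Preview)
Your proposal is correct and follows essentially the same route as the paper: derive $(\mathrm{E}^{*})$ from (E) by dropping the nonnegative term $\frac{1}{n}\int_M\|a^T\|^2\,dV$ from the denominator, and for the equality case observe that equality in $(\mathrm{E}^{*})$ forces both equality in (E) and $a^T\equiv 0$, then invoke Takahashi. You are in fact more explicit than the paper on one point: the paper jumps directly from ``equality in (E) and $a^T=0$'' to $\Delta\widehat\psi+\lambda_1\widehat\psi=0$, whereas you spell out why $\mu_a$ must vanish (because $\widehat\psi$, and hence $\Delta\widehat\psi$, takes values in $a^\perp$ once the gravity center is at the origin, forcing $\mu_a a\in a^\perp$). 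Your anticipated ``obstacle'' about the normalization is handled correctly: with the gravity center at the origin, $\int_M\langle a,\psi\rangle\,dV=0$ together with $\langle a,\psi\rangle$ constant gives $\langle a,\psi\rangle\equiv 0$, so indeed $\Pi=a^\perp$ and $\widehat\psi=\psi$.
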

\begin{proof}
Clearly the inequality (E) implies $(\mathrm{E}^{*})$. The equality holds in $(\mathrm{E}^{*})$ if and only if $a^{T}=\nabla \langle a, \psi \rangle=0$ and we have equality in (E). That is, $M$ factors through a spacelike affine hyperplane $\Pi$ orthogonal to $a\in \L^m$ and $\Delta \widehat{\psi} + \lambda_{1}\widehat{\psi}=0$. The classical Takahashi result \cite{Taka} ends the proof.
\end{proof}

\begin{remark}\label{end}
{\rm From each spacelike submanifold $\psi:M\to  \L^m$ and each unit timelike vector $a\in \L^m$, we can consider the immersion
$\psi_{a}:M \to \E^{m-1}_{a}$ given by $\psi_{a}(x):=\psi(x) +\langle a, \psi(x)\rangle a$ where $ \E^{m-1}_{a}=a^{\perp}$. Obviously, the Reilly inequality (\ref{reilly}) holds for $\psi_a$. Nevertheless, the metric induced via $\psi_a$ and $\psi$ are different, in general. In fact, a direct computation gives
$$
(\psi_{a})^{*}\langle\,\,,\,\, \rangle_{\E^{m-1}_{a}}=\psi^{*}\langle\,\,,\,\, \rangle_{\L^m}+ d\psi_{1}\otimes d\psi_{1},
$$
where $\psi_{1}=-\langle a, \psi \rangle$ and of course, the mean curvature vector field  $\mathbf{H}_{\psi_a}$ corresponding to $\psi_a$ and $\mathbf{H}_a$ are also different, in general. 
Consequently, one cannot think that $\mathrm{(E^{*})}$ can be derived from (\ref{reilly}).}
\end{remark}

This paper concludes with the following direct application of Theorem \ref{reprincipal}.
\begin{corollary}\label{noche}
If the first eigenvalue $\lambda_1$ of the Laplace operator of a compact spacelike $n$-dimensional spacelike submanifold  $M$ in $\L^{n+2}$ satisfies $\lambda_{1}=n\|\mathbf{H}_{a}\|^2$ for some unit timelike vector $a\in \L^m$. Then $M$ is contained in a spacelike affine hyperplane orthogonal to $a\in \L^m$ as a round $n$-sphere of radius $1/\|\mathbf{H}_{a}\|$.
\end{corollary}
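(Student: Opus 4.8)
The plan is to apply Theorem \ref{reprincipal} directly, observing that the hypothesis $\lambda_1 = n\|\mathbf{H}_a\|^2$ forces the equality case in the inequality $(\mathrm{E}^*)$. Concretely, from $(\mathrm{E}^*)$ we always have
$$
\lambda_1 \,\mathrm{Vol}(M) \leq n \int_M \|\mathbf{H}_a\|^2\, dV,
$$
while the hypothesis says $\lambda_1 = n\|\mathbf{H}_a\|^2$ holds pointwise, hence $\lambda_1 \,\mathrm{Vol}(M) = n\int_M \|\mathbf{H}_a\|^2\, dV$. Thus equality is attained in $(\mathrm{E}^*)$, and Theorem \ref{reprincipal} tells us that $M$ factors through a spacelike affine hyperplane $\Pi$ orthogonal to $a$, and $M$ is minimal in some hypersphere of $\Pi$ of radius $r=\sqrt{n/\lambda_1}$.

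The next step is to identify this radius with $1/\|\mathbf{H}_a\|$. Since $\lambda_1 = n\|\mathbf{H}_a\|^2$ we get immediately $r = \sqrt{n/\lambda_1} = \sqrt{n/(n\|\mathbf{H}_a\|^2)} = 1/\|\mathbf{H}_a\|$, so the hypersphere has the stated radius. It remains to argue that, in the codimension-two setting $m = n+2$, ``minimal in a hypersphere of $\Pi$'' actually means $M$ \emph{is} the round $n$-sphere. Indeed, $\Pi$ is isometric to Euclidean $\E^{n+1}$, a hypersphere of $\Pi$ is an ordinary round $n$-sphere $\S^n(r)\subset \E^{n+1}$, and $M$ sits as a minimal submanifold of $\S^n(r)$. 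But $\dim M = n = \dim \S^n(r)$, so $M$ is an open subset of $\S^n(r)$; since $M$ is compact (hence closed) and $\S^n(r)$ is connected, $M = \S^n(r)$. This gives that $M$ is a round $n$-sphere of radius $1/\|\mathbf{H}_a\|$ contained in the spacelike affine hyperplane $\Pi \perp a$, as claimed.

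The argument is essentially a specialization-plus-dimension-count, so there is no serious obstacle; the only point requiring a moment's care is the last one, namely recognizing that a minimal submanifold of a round sphere of the same dimension is the whole sphere. One should also note explicitly that the pointwise identity $\lambda_1 = n\|\mathbf{H}_a\|^2$ in particular forces $\|\mathbf{H}_a\|$ to be a (nonzero) constant on $M$, which is consistent with the conclusion and is what makes the phrase ``radius $1/\|\mathbf{H}_a\|$'' meaningful. With these observations the corollary follows immediately from Theorem \ref{reprincipal}.
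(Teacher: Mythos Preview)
Your proof is correct and follows exactly the approach intended by the paper, which states the corollary as a ``direct application of Theorem \ref{reprincipal}'' without giving an explicit proof. You have filled in precisely the details one expects: the pointwise hypothesis forces equality in $(\mathrm{E}^*)$, Theorem \ref{reprincipal} yields the factorization through $\Pi\perp a$ and minimality in a hypersphere of radius $\sqrt{n/\lambda_1}=1/\|\mathbf{H}_a\|$, and the dimension count $m=n+2$ (so $\dim\Pi=n+1$) makes that hypersphere an $n$-sphere in which $M$ must be the whole sphere.
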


\begin{remark}
{\rm The families of inequalities (E) and $(\mathrm{E}^{*})$ are parametrized on the set $\H^{m-1}$ of unit timelike vectors in $\L^m$. Thus, for a compact $n$-dimensional spacelike submanifold $M$ in Lorentz-Minkowski spacetime $\L^m$,
 the first eigenvalue $\lambda_1$ of the Laplace operator of $M$ satisfies
$$
\lambda_{1}\leq \inf_{a\in \H^{m-1}}\,n\,\frac{\displaystyle{\int}_{M}\Big(\|\mathbf{H}\|^2 + \langle \mathbf{H} , a \rangle ^2\Big)\,dV}{\mathrm{Vol}(M) +\frac{1}{n} \displaystyle{\int}_{M}\| a^{T}\|^2\, dV}.
$$
A similar inequality is obtained from $(\mathrm{E}^{*})$.

}
\end{remark}
\bigskip

\end{document}